\newtheorem*{thm-no-num}{Theorem}
\newtheorem*{cor-no-num}{Corollary}
\newtheorem*{df-no-num}{Definition}
\newtheorem{thm}{Theorem} [section]
\newtheorem{prop}[thm]{Proposition} 
\newtheorem{cor}[thm]{Corollary} 
\theoremstyle{remark}
\newtheorem{rmk}[thm]{Remark}
\newtheorem{ex}[thm]{Example}
\theoremstyle{definition} 
\newtheorem {df}[thm]{Definition}
\newcommand{\bA}{\mathbb{A}}
\newcommand{\Bcal}{\mathcal{B}}
\newcommand{\Ccal}{\mathcal{C}}
\newcommand{\Dcal}{\mathcal{D}}
\newcommand{\Fcal}{\mathcal{F}}
\newcommand{\Gm}{\mathbb{G}_m}
\newcommand{\Hcal}{\mathcal{H}}
\newcommand{\Lcal}{\mathcal{L}}
\newcommand{\Mcal}{\mathcal{M}}
\newcommand{\Ncal}{\mathcal{N}}
\newcommand{\Ocal}{\mathcal{O}}
\newcommand{\Pcal}{\mathcal{P}}
\newcommand{\sbold}{\mathbf{s}}
\newcommand{\Ucal}{\mathcal{U}}
\newcommand{\Vcal}{\mathcal{V}}
\newcommand{\Xcal}{\mathcal{X}}
\newcommand{\ZZ}{\mathbb{Z}}
\newcommand{\M}{{\rm M}}
\newcommand{\K}{{\rm K}}
\renewcommand{\H}{{\rm H}}
\DeclareMathOperator{\Het}{H^{\bullet}}
\DeclareMathOperator{\id}{id}
\DeclareMathOperator{\Inv}{Inv^{\bullet}}
\DeclareMathOperator{\Pic}{Pic}
\DeclareMathOperator{\PGL}{PGL}
\DeclareMathOperator{\pr}{pr}
\DeclareMathOperator{\Spec}{Spec}
\DeclareMathOperator{\USpec}{\underline{Spec}}
\DeclareMathOperator{\Sym}{Sym}
\begin{document}
\title[Invariants of root stacks and admissible double coverings]{Cohomological invariants of root stacks and admissible double coverings}
\author[A. Di Lorenzo]{Andrea Di Lorenzo}
	\address{Aarhus University, Ny Munkegade 118, DK-8000 Aarhus C, Denmark}
	\email{andrea.dilorenzo@math.au.dk}
\author[R. Pirisi]{Roberto Pirisi}
	\address{KTH Royal Institute of Technology, Lindstedtsvägen 25, 10044 Stockholm, Sweden}
	\email{pirisi@kth.se}	
\date{\today}
\begin{abstract}
We give a formula for the cohomological invariants of a root stack, which we apply to compute the cohomological invariants and the Brauer group of the stack of admissible double coverings. 

2010 MSC classification: 14F22, 14H10
\end{abstract}
\maketitle

\section*{Introduction}\label{sec:intro}

% {\todo NOTAZIONE: usiamo D per i divisori ma D è anche il nome che usiamo per un generico modulo di Galois. Se usassimo V per i divisori? }
% {\answer Ok!}

% {\todo Notazioni: $\ell$ torsione del modulo di Galois, $r$ ordine del root stack?}
% {\answer Ok!}

Cohomological invariants of algebraic groups are a classical topic, whose first examples date back to the beginning of the twentieth century \cite{Wit}. %Some examples are the discriminant and the Clifford invariant.
The modern definition was developed by Serre in the late '90s, and the general theory can be found in Garibaldi, Merkurjev and Serre's book \cite{GMS}.
From a more geometric point of view, they can be seen as invariants of the classifying stack ${\rm B}G$ rather than the group $G$. Following this idea, in \cite{PirAlgStack}, the second author defined an extension of the notion of cohomological invariants to general algebraic stacks.

Let $\Xcal$ be an algebraic stack  over a field $k$ and $\M$ a cycle module over $k$, as defined in \cite{Rost}. A cohomological invariant for $\Xcal$ with coefficients in $\M$ is a functorial way to associate to any point $\Spec(K) \rightarrow \Xcal$ an element of $\M^{\bullet}(K)$, satisfying a certain continuity condition. Originally, in \cite{GMS}, cohomological invariants were defined as taking values in the cycle module ${\rm H}_D$, where $D$ is an $\ell$-torsion Galois module for some positive number $\ell$ not divisible by ${\rm char}(k)$, and ${\rm H}_D^{\bullet}(F):=\oplus_i {\rm H}^i_{\rm Gal}(F,D(i))$. The theory for general cycle modules was recently developed by Gille and Hirsch \cite{GilHir}, and adapted to the setting of cohomological invariants of algebraic stacks by th authors in \cite{DilPir2}.

Our definition retrieves the classical theory of cohomological invariants of algebraic groups when $\Xcal={\rm B}G$, and if $\Xcal=X$ is a scheme its cohomological invariants with coefficients in ${\rm H}_D$ are equal to its unramified cohomology ${\rm H}^{\bullet}_{\rm nr}(X,D)$.
%, and in general for a scheme they are equal to the zero-dimensional Chow group with coefficient $A^0(X,\M)$.
Moreover, in \cite{DilPir2}, the authors proved that the cohomological invariants compute the Brauer group, and used them to compute the Brauer groups of the moduli stacks $\Hcal_g$ of hyperelliptic curves of genus $g$.

In \cite{DilPir}*{Appendix A}, the authors showed that the cohomological invariants (and consequently, by \cite{DilPir2}*{Rmk. 6.11}, the Brauer group) of the standard compactification $\overline{\Hcal_g}$ of $\Hcal_g$ are trivial, while claiming this to not occur for the compactification $\widetilde{\Hcal}_g$ obtained via the theory of admissible double coverings. In this paper we prove the aforementioned claim:

\begin{thm-no-num}[Thms \ref{thm:inv even}, \ref{thm:inv odd}]
Let $\M$ be an $\ell$-torsion cycle module. Then:
\begin{itemize}
    \item If $\ell$ is odd, we have $\Inv(\widetilde{\Hcal}_g,\M)=\M^{\bullet}(k)$.
    \item If $\ell=2^m$, then we have an exact sequence
    $$0 \rightarrow \Inv({\rm BS}_{2g+2},\M) \rightarrow \Inv(\widetilde{\Hcal}_g,\M) \rightarrow \M^{\bullet}(k)_2 ,$$
\end{itemize}
where the last map lowers degree by $g+2$. Moreover, if $g$ is even, the last map is surjective and split.
\end{thm-no-num}

Thanks to the result above we are able compute the prime-to-$c$ part of the Brauer group of $\widetilde{\Hcal}_g$, which coincides with the whole Brauer group of the stack when the base field has characteristic zero:

\begin{cor-no-num}[Thm \ref{thm:Brauer}]
We have 
$$^c{\rm Br}(\widetilde{\Hcal}_g)=^c\!{\rm Br}(k) \oplus {\rm H}^{1}(k,\mathbb{Z}/2\mathbb{Z}) \oplus \mathbb{Z}/2\mathbb{Z}. $$
\end{cor-no-num}

If one considers the analogy with unramified cohomology, this appears to be in stark contrast to what happens for proper smooth varieties, where unramified cohomology is a birational invariant. The stacks $\overline{\Hcal_g}$ and $\widetilde{\Hcal}_g$ are smooth, proper (in fact, projective, according to Kresch's definition of a projective Deligne-Mumford stack \cite{KreProj}) and birational to each other, but their cohomological invariants are very different. The reason why this happens is related to the so called root stack construction.

\subsection*{Weak factorization, root stacks and cohomological invariants}
Given smooth projective varieties $X,Y$ over a base field $k$ of characteristic zero, the weak factorization theorem states that a birational map $X \dashrightarrow Y$, which is an isomorphism on an open subset $U$, always factors as a composition
$$X=X_0 \overset{f_0}{\dashrightarrow}X_1 \overset{f_1}{\dashrightarrow} \ldots \overset{f_{n-1}}{\dashrightarrow} X_n = Y$$
where for each $i$ either $f_i$ or $f_i^{-1}$ is a blow-up in a smooth center and each $f_i$ is an isomorphism on $U$.

Recently, Harper \cite{Har} proved a similar result for Deligne-Mumford stacks over a field of characteristic zero. Given a proper birational map $\mathcal{X} \rightarrow \mathcal{Y}$, we always have a factorization
$$\mathcal{X}=\mathcal{X}_0 \overset{f_0}{\dashrightarrow}\mathcal{X}_1 \overset{f_1}{\dashrightarrow} \ldots \overset{f_{n-1}}{\dashrightarrow} \mathcal{X}_n = \mathcal{Y}$$
where for each $i$ either $f_i$ or $f_i^{-1}$ is a \emph{stacky blow-up}, that is either a regular blow up in a smooth center or a root stack along a smooth divisor. Given a Cartier divisor $V$, the order $r$ root stack along $V$, denoted $\mathcal{X}_{(V,r)} \rightarrow \mathcal{X}$, is a canonical way to add a $\mu_r$ stabilizer along the points of $V$. 

Unramified cohomology and cohomological invariants are left unchanged when taking a blow-up, but can change drastically when taking a root stack. If one wants to be able to compare the cohomological invariants of birational proper Deligne-Mumford stacks, it is critical to be able to understand what happens when taking a root stack. This is described in the following Theorem:

\begin{thm-no-num}[Thm. \ref{thm:root Nis}]
Let $\Xcal$ be a smooth algebraic stack over a field $k$, and let $r\geq2$ be an integer that is not divisible by ${\rm char}(k)$. Let $V \subset \Xcal$ be a divisor. 

Then $\Inv(\Xcal_{(V,r)},\M)$ is equal to the subset of $\Inv(\Xcal\setminus V,\M)$ of invariants $\alpha$ such that the ramification of $\alpha$ at $V$ is of $r$-torsion.
\end{thm-no-num}

The stack $\widetilde{\Hcal}_g$ is, up to codimension $2$ closed substacks, a root stack over $\overline{\Hcal}_g$. More precisely, after excising some loci, it can be obtained from an open substack of $\overline{\Hcal}_g$ by adding a $\mu_2$ stabilizer along the divisor of irreducible singular curves.

As cohomological invariants are unchanged when removing a closed subset of codimension $\geq 2$, we can (and we do) apply the Theorem above to compute the cohomological invariants of $\widetilde{\Hcal}_g$, leveraging our previous knowledge of the cohomological invariants of $\Hcal_g$ and $\overline{\Hcal}_g$.

\subsection*{Cohomological invariants and Chow groups with coefficients}

We briefly recall some properties of cohomological invariants which will be used in the paper. For a much more detailed introduction to cohomological invariants and Chow groups with coefficients, the reader can refer to \cite{DilPir2}*{Sec. 1}.

Given a field $F$, let ${\rm T}^{\bullet}(F)$ be the tensor algebra $\oplus_{i\geq 0}(F^*)^{\otimes_{\ZZ} i}$. Milnor's ${\rm K}$-theory $\K_{\rm M}$ is the functor from field extensions of $k$ to graded-commutative rings which sends a field $F$ to 
\[
\K_{\M}^{\bullet}(F) = {\rm T}^{\bullet}(F)/\lbrace a\otimes b \mid a,b \in F^*, \, a + b =1 \rbrace.
\]

Milnor's $\K$-theory is the basic, and most important, cycle module. A general cycle module $\M$ is a functor from fields over $k$ to abelian groups which is functorially a $\K_\M$-module, and shares the main properties of Milnor's $\K$-theory. Given a torsion Galois module $D$ over $k$, the functor 
\[
\H_D^{\bullet}(F)=\oplus_{i\geq 0}\H^i_{\rm Gal}(F,D(i))
\]
is a cycle module. In particular if we take $D=\ZZ/\ell\ZZ$ by Voevodsky's norm-residue isomorphism \cite{Voe}*{Thm. 6.1} we have
\[
\H_{\ZZ/\ell \ZZ}=\K_M/(\ell)\stackrel{{\rm def}}{=} \K_\ell.
\]
If $\M$ is of $\ell$-torsion, it is a $\K_\ell$-module.

Cohomological invariants with coefficients in a cycle module $\M$ form a graded abelian group. When $\M$ is given by $\K_{\M}$ or $\K_{\ell}$, they form a graded-commutative ring. In general, $\Inv(\Xcal,\M)$ is a $\Inv(\Xcal,\K_{\M})$-module, and if $\M$ is of $\ell$-torsion it is a $\Inv(\Xcal,\K_{\ell})$-module.

The functor $\Inv(-,\M)$ forms a sheaf with respect to the \emph{smooth-Nisnevich} tolopogy, where the coverings are smooth representable maps $\mathcal{Y} \to \Xcal$ such that any point $\Spec(K) \to \Xcal$ has a lifting $\Spec(K) \to \mathcal{Y}$. Moreover, cohomological invariants are unchanged when passing to an affine bundle or an open subset whose complement has codimension $\geq 2$, and in general restriction to a dense open subset is injective.

Our main tool for computatations will be Rost's Chow groups with coefficients \cite{Rost}. These are an extension of ordinary Chow groups: instead of having $\ZZ$-linear sums of cycles, they are generated by sums in the form $\sum_i \alpha_i P_i$, where $\alpha_i$ belongs to $\M^{\bullet}(k(P))$. The $i$-codimensional Chow group of $X$ with coefficients in $\M$ is denoted by $A^{i}(X,\M)$.

Chow groups with coefficients share the same properties of ordinary Chow groups, and moreover given a closed subset $V$ of $X$ of pure codimension $d$ they have a long exact sequence
\[ \ldots \to A^i(X,D) \to A^i(X\setminus V,D) \to A^{i+1-d}(V,D) \to A^{i+1}(X,D) \to \ldots
\]
When $X$ is a smooth scheme, we have an isomorphsim $A^0(X,\M) \simeq \Inv(X,\M)$. 

More generally, if $X$ is a smooth scheme being acted upon by a smooth affine algebraic group $G$ we can use the Edidin-Graham-Totaro approximation process \cite{EG} to define equivariant Chow groups with coefficients $A^i_{G}(X,\M)$. This was first done by Guillot \cite{Guil}. Again, we have
\[ \Inv(\left[X/G\right],\M) \simeq A^{0}_{G}(X,\M).\]

\subsection*{Outline of the paper}
In Section \ref{sec:root} we discuss the root stack construction in greater detail. In particular, we give a presentation of the root stack $X_{(V,r)}$ as a quotient (Proposition \ref{prop:root as quotient}), which we use to deduce our formula for cohomological invariants (Theorem \ref{thm:root stack}).

In Section \ref{sec:stable} we recall some basic facts on the stack of stable hyperelliptic curves and we provide a new description of an open substack of $\overline{\Hcal}_g$, which will be useful for our purposes.

In Section \ref{sec:admissible} we describe the theory of admissible double coverings and its connection to hyperelliptic curves.

Finally, in Section \ref{sec:computations}, we compute the cohomological invariants of $\widetilde{\Hcal}_g$ (Theorem \ref{thm:inv even} for the even genus case and Theorem \ref{thm:inv odd} for the odd genus case), as well as its Brauer group (Theorem \ref{thm:Brauer}).

\subsection*{Notation and conventions}
Throughout the paper we will fix a base field $k$ of characteristic $c\neq 2$. All stacks and schemes are assumed to be of finite type over $k$ unless otherwise specified. By $\ell$ or $r$ we will always mean a positive integer that is not divisible by the characteristic of $k$. If $A$ is an abelian group, we denote $A_{\ell}$ the $\ell$-torsion subgroup. If moreover $A$ is torsion, we denote $^c\!A$ the subgroup of elements whose order is not divisible by $c$.

\section{Cohomological invariants of root stacks}\label{sec:root}
Fix an integer $r\geq 1$ and let $X$ be an integral scheme over some field $k$ whose characteristic does not divide $r$. Choose an invertible sheaf $\Lcal$ over $X$ and define $X_{(\Lcal,r)}$ as the root stack of order $r$ of $X$ along the invertible sheaf $\Lcal$: this is the stack over the big \'{e}tale site of $X$ whose objects are triples $(S\to X,\Mcal,\phi)$, where $\Mcal$ is an invertible sheaf over $S$ together with an isomorphism $\phi:\Lcal_S\simeq\Mcal^{\otimes r}$. A morphism $(S\to X,\Mcal,\phi)\to (S\to X,\Mcal',\phi')$ over $S$ is given by an isomorphism of invertible sheaves $\psi:\Mcal\simeq\Mcal'$ such that $\phi'\circ\psi^{\otimes r}=\phi$.

The root stack $X_{(\Lcal,r)}$ has a simple presentation as quotient stack: let $L$ be the line bundle associated to the invertible sheaf $\Lcal$, i.e. $L:=\USpec_{\Ocal_X}(\Sym \Lcal^{\vee})$, and denote $L^*$ the complement of the zero section in $L$.

The natural $\Gm$-action on $L$ turns $L^*\to X$ into a $\Gm$-torsor. For every $r\geq 1$ we have a well defined action  of $\Gm^{\otimes r}$ over $L$, via the obvious multiplication homomorphism $\Gm^{\otimes r}\to \Gm$.
\begin{prop}\label{prop:root as quotient simple}
    We have $X_{(\Lcal,r)}\simeq \left[ L^*/\Gm^{\otimes r} \right]$.
\end{prop}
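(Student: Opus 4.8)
The plan is to exhibit a natural equivalence of stacks over the big étale site of $X$ by matching the "triple" description of $X_{(\Lcal,r)}$ with the groupoid of $\Gm^{\otimes r}$-equivariant maps to $L^*$. First I would unwind the definition of the quotient stack $\left[L^*/\Gm^{\otimes r}\right]$: an object over $S\to X$ is a $\Gm^{\otimes r}$-torsor $P\to S$ together with a $\Gm^{\otimes r}$-equivariant map $P\to L^*$. Since $\Gm^{\otimes r}\simeq\Gm$ abstractly, a $\Gm^{\otimes r}$-torsor is the same datum as an invertible sheaf $\Mcal$ on $S$ (take $P=\Mcal^*$, the complement of the zero section in the associated line bundle $M$), but with the $\Gm$-action on $\Mcal$ rescaled through the $r$-th power map $\Gm\to\Gm^{\otimes r}$.

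The key step is then to interpret a $\Gm^{\otimes r}$-equivariant map $\Mcal^*\to L^*$ over $X$. Because $L^*\to X$ is the $\Gm$-torsor of frames of $\Lcal$, giving a $\Gm$-equivariant map $Q\to L^*$ from a $\Gm$-torsor $Q$ is the same as giving a $\Gm$-torsor isomorphism $Q\simeq L^*_S$, equivalently an isomorphism of invertible sheaves. When the source torsor is $\Mcal^*$ with its action twisted by the $r$-th power map, a $\Gm^{\otimes r}$-equivariant map $\Mcal^*\to L^*$ unwinds precisely to an isomorphism of invertible sheaves $\phi\colon\Lcal_S\simeq\Mcal^{\otimes r}$: the $r$-th power reparametrization of the $\Gm$-action on the source corresponds, on associated invertible sheaves, to raising $\Mcal$ to the $r$-th tensor power. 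Morphisms in $\left[L^*/\Gm^{\otimes r}\right]$ over $S$ are isomorphisms of torsors commuting with the maps to $L^*$, which translate into isomorphisms $\psi\colon\Mcal\simeq\Mcal'$ with $\phi'\circ\psi^{\otimes r}=\phi$; this is exactly the morphism condition in $X_{(\Lcal,r)}$. One checks these assignments are mutually inverse and compatible with base change along $S'\to S$, giving the desired equivalence of categories fibered in groupoids over the big étale site.

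The main obstacle I expect is bookkeeping the twist of the $\Gm$-action correctly and verifying that it really implements the $\otimes r$ operation on invertible sheaves — i.e. making precise the statement "a $\Gm$-torsor whose action is pulled back along $\Gm\xrightarrow{(-)^r}\Gm$ corresponds to the $r$-th tensor power of the associated invertible sheaf." Concretely, if $\Mcal$ has transition functions $(g_{ij})$ then $\Mcal^*$ with the $r$-th-power-twisted action has the property that a $\Gm^{\otimes r}$-equivariant map to $L^*$, whose target has transition functions those of $\Lcal$, forces the transition functions of $\Lcal$ to equal $(g_{ij}^r)$, i.e. an isomorphism $\Lcal_S\simeq\Mcal^{\otimes r}$. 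I would make this clean by working torsor-theoretically: use that pushing a $\Gm^{\otimes r}$-torsor forward along the multiplication $\Gm^{\otimes r}\to\Gm$ sends $\Mcal^*$ (twisted) to $(\Mcal^{\otimes r})^*$, so that $\Gm^{\otimes r}$-equivariant maps $\Mcal^*\to L^*$ biject with $\Gm$-equivariant maps $(\Mcal^{\otimes r})^*\to L^*$, hence with isomorphisms $\Lcal_S\simeq\Mcal^{\otimes r}$. Everything else is a routine naturality check, and the final equivalence is then immediate.
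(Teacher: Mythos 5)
Your proposal is correct and follows essentially the same route as the paper: identify a $\Gm^{\otimes r}$-torsor with the frame bundle $\Mcal^*$ of an invertible sheaf, and observe that a $\Gm^{\otimes r}$-equivariant map to $L^*$ is forced to be an isomorphism of torsors after pushing forward along $\Gm^{\otimes r}\to\Gm$, hence amounts to an isomorphism $\Lcal_S\simeq\Mcal^{\otimes r}$; the morphism condition $\phi'\circ\psi^{\otimes r}=\phi$ then matches as you say. Your torsor-theoretic bookkeeping of the $r$-th-power twist is in fact slightly more explicit than the paper's, but the argument is the same.
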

\begin{proof}
    A $\Gm^{\otimes r}$-torsor $P^*$ over $S$ is the complement in a line bundle $P\to S$ of the image of the zero section, where $P$ is of the form $P=\USpec_{\Ocal_S}(\Sym (\Mcal^{\otimes r})^{\vee})$. 
    
    Given a morphism $S\to X$, a $\Gm^{\otimes r}$-equivariant morphism $P^*\to L^*_S$ is necessarily an isomorphism, which induces an isomorphism of invertible sheaves $\Lcal\simeq \Mcal^{\otimes r}$.
    
    By definition, the groupoid of maps $S\to \left[ L^*/\Gm^{\otimes r}\right]$ is equivalent to the groupoid of $\Gm^{\otimes r}$-torsors $P^*\to S$ endowed with an equivariant morphism to $L^*$: by what we have just observed, the latter is equivalent to the groupoid of maps $S\to X$ together with an invertible sheaf $\Mcal$ over $S$ and an isomorphism $\Lcal_S\simeq\Mcal^{\otimes r}$. This shows that $X_{(\Lcal, r)}(S)\simeq \left[ L^*/\Gm^{\otimes r} \right](S)$ for every $X$-scheme $S$ and thus concludes the proof.
\end{proof}

Consider a non-zero global section $\sigma$ of $\Lcal$, and let $V$ be the divisor where the section $\sigma$ vanishes. The stack $X_{(V,r)}$ is the root stack of order $r$ of $X$ along $V$: its objects are quadruples $(S\to X,\Mcal,\tau,\phi)$ where $\Mcal$ is an invertible sheaf over $S$ and $\tau$ is a global section of $\Mcal$, together with an isomorphism $\phi:\Lcal_S\simeq\Mcal^{\otimes r}$ such that $\phi(\sigma)=\tau^{\otimes r}$.

The root stack $X_{(V,r)}$ also admits a simple description as a quotient stack: regard $\sigma$ as a morphism from $L$ to $\bA^1_X$, and let $(\cdot)^r:\bA^1_X\to\bA^1_X$ be the $r^{\rm th}$-power morphism, so that we can form the cartesian diagram
\[
\xymatrix{Y^* \ar[r] \ar[d] & \bA^1_X \ar[d]^{(\cdot)^r} \\
L^* \ar[r]^{\sigma} & \bA^1_X }
\]

Roughly, the scheme $Y$ can also be thought as the closed subscheme of $L^*\times \bA^1$ locally described by the equation $u^r=\sigma(x,t)$, where $x$ and $t$ are local coordinates for $L^*$ and $u$ is a coordinate for $\bA^1$.

We can define an action of $\Gm$ on $L^*$ and $\bA^1_X$ so that the morphisms $\sigma$ and $(\cdot)^r$ become equivariant: namely, we let $\Gm$ acts on both $L^*$ and the $\bA^1_X$ on the bottom right corner via its $r^{\rm th}$-power, and on the $\bA^1_X$ in the top right corner via the standard action. In this way $Y^*$ inherits a $\Gm$-action too, and we get a cartesian diagram of quotient stacks as follows:
\[
\xymatrix{\left[Y^*/\Gm\right] \ar[r] \ar[d] & \left[\bA^1_X/\Gm\right] \ar[d]\\
\left[ L^*/\Gm^{\otimes r} \right] \ar[r] & \left[\bA^1_X/\Gm^{\otimes r}\right]}
\]
\begin{prop}\label{prop:root as quotient}
    We have $X_{(V,r)}\simeq [Y^*/\Gm]$.
\end{prop}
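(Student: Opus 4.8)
The plan is to combine Proposition~\ref{prop:root as quotient simple} with the cartesian square of quotient stacks displayed just above, and to read off directly the moduli problem represented by the top-left corner $[Y^*/\Gm]$. The first step is to observe that that square is genuinely cartesian: $Y^*=L^*\times_{\bA^1_X}\bA^1_X$ by construction, all four quotients are formed with respect to the single group $\Gm$ (the symbol $\Gm^{\otimes r}$ only recording that the action has been pre-composed with the $r$-th power isogeny $\Gm\to\Gm$), and the maps $\sigma$ and $(\cdot)^r$ are $\Gm$-equivariant for the prescribed actions; hence the square is an instance of the general identity $[(A\times_C B)/\Gm]\simeq[A/\Gm]\times_{[C/\Gm]}[B/\Gm]$ for equivariant maps $A\to C\leftarrow B$. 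Consequently an $S$-point of $[Y^*/\Gm]$ is the same as a triple consisting of an $S$-point of $[L^*/\Gm^{\otimes r}]$, an $S$-point of $[\bA^1_X/\Gm]$, and an isomorphism between their images in $[\bA^1_X/\Gm^{\otimes r}]$.

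Next I would identify the three remaining corners as moduli stacks. By Proposition~\ref{prop:root as quotient simple}, an $S$-point of $[L^*/\Gm^{\otimes r}]$ is a triple $(S\to X,\Mcal,\phi)$ with $\phi\colon\Lcal_S\simeq\Mcal^{\otimes r}$. By the same reasoning, now with $\bA^1_X$ in place of $L^*$ (this is simply the standard description of the quotient of an affine line by $\Gm$), an $S$-point of $[\bA^1_X/\Gm]$ is a datum $(S\to X,\Ncal,\tau)$ with $\tau\in\Gamma(S,\Ncal)$, and an $S$-point of $[\bA^1_X/\Gm^{\otimes r}]$ is a datum $(S\to X,\Ncal,\upsilon)$ with $\upsilon\in\Gamma(S,\Ncal^{\otimes r})$. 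Under these identifications, the arrow induced by $\sigma$ sends $(\Mcal,\phi)$ to $(\Mcal,\phi(\sigma_S))$ — since $\sigma$ is by construction the morphism $L\to\bA^1_X$ given by the section $\sigma$ of $\Lcal$ — and the arrow induced by $(\cdot)^r$ sends $(\Ncal,\tau)$ to $(\Ncal,\tau^{\otimes r})$. Therefore an $S$-point of the fiber product is the datum of $(S\to X,\Mcal,\phi)$ together with $\tau\in\Gamma(S,\Mcal)$, subject to $\phi(\sigma_S)=\tau^{\otimes r}$, where one has used the isomorphism $\Ncal\simeq\Mcal$ to identify the two line bundles. This is precisely an $S$-point of $X_{(V,r)}$. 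A routine check that the resulting bijection is natural in $S$ and respects isomorphisms then gives the equivalence $X_{(V,r)}\simeq[Y^*/\Gm]$.

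The only point requiring genuine care is the bookkeeping of the $\Gm$-weights: one must make sure that, with the actions as specified in the statement, the $L^*$-coordinate really contributes the isomorphism $\Lcal_S\simeq\Mcal^{\otimes r}$ via Proposition~\ref{prop:root as quotient simple}, while the top $\bA^1_X$-coordinate contributes a section of $\Mcal$ itself — not of some twist — so that the fiber-product relation comes out as $\phi(\sigma)=\tau^{\otimes r}$ and not some other equation. Once the conventions are matched to those in the definition of $X_{(V,r)}$, nothing beyond unwinding definitions is needed. Alternatively, one can avoid invoking the cartesian square and argue directly on $S$-points, exactly as in the proof of Proposition~\ref{prop:root as quotient simple}: starting from a $\Gm$-torsor $P^*\to S$ with an equivariant morphism $P^*\to Y^*$, the component $P^*\to L^*$ produces the pair $(\Mcal,\phi)$ as there, the component $P^*\to\bA^1_X$ produces a global section $\tau$ of $\Mcal$, and the cartesianness of the square defining $Y^*$ translates into the identity $\phi(\sigma)=\tau^{\otimes r}$.
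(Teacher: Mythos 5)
Your argument is correct and follows essentially the same route as the paper's: identify the three corners of the square as moduli of line bundles with the appropriate extra data (via Proposition~\ref{prop:root as quotient simple} and the standard description of $[\bA^1/\Gm]$), describe the two arrows into $[\bA^1_X/\Gm^{\otimes r}]$, and read off the fiber product as $X_{(V,r)}$. Your explicit justification that the square of quotient stacks is cartesian is a small point the paper leaves implicit, but it does not change the substance of the proof.
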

\begin{proof}
    By Proposition \ref{prop:root as quotient simple} the groupoid $\left[ L^*/\Gm^{\otimes r} \right](S)$ is equivalent to the groupoid of objects $(S\to X, \Mcal, \Lcal_S\simeq \Mcal^{\otimes r})$.
    
    As for $\left[ \bA^1_X/\Gm \right]$, this stack is isomorphic to $X\times \left[\bA^1/\Gm\right]$. It is well known that the groupoid of maps from $S$ to $[\bA^1/\Gm]$ is equivalent to the groupoid of invertible sheaves $\Ncal$ over $S$ together with a global section $\tau$.
    
    The morphism $[\bA^1_X/\Gm]\to [\bA^1_X/\Gm^{\otimes r}]$ sends an object $(S\to X,\Ncal,\tau)$ to $(S\to X,\Ncal,\tau^{\otimes r})$. The morphism $[L^*/\Gm^{\otimes r}]\to [\bA^1_X/\Gm^{\otimes r}]$ sends an object $(S\to X,\Mcal,\phi:\Lcal\simeq\Mcal^{\otimes r})$ to the object $(S\to X,\Mcal,\phi\circ\sigma_S)$, where $\sigma_S:\Ocal_S\to\Lcal_S$ is the pullback of the global section $\sigma$ of $\Lcal$ defining $V$.
    
    Therefore, by definition of cartesian product of stacks, the objects of the groupoid $[Y^*/\Gm](S)$ are formed by the data of an object $(S\to X,\Mcal,\phi:\Lcal\simeq\Mcal^{\otimes r})$ of $[L^*/\Gm^{\otimes r}]$, an object $(S\to X,\Ncal,\tau)$ of $[\bA^1_X/\Gm]$ and isomorphisms
    \[ \Mcal\simeq\Ncal,\quad \phi\circ\sigma_S = \tau^{\otimes r} \]
    
    This groupoid is clearly equivalent to the one whose objects are maps $S\to X$ together with an invertible sheaf $\Mcal$ over $S$, a global section $\tau$ of $\Mcal$ and an isomorphism $\psi:\Lcal_S\simeq \Mcal^{\otimes r}$ satisfying $\psi(\sigma_S)=\tau^{\otimes r}$. In other terms, we have showed that there is an equivalence of groupoids $[Y^*/\Gm](S)\simeq X_{(V,r)}(S)$. It is easy to check that this equivalence is functorially well behaved, from which the Proposition follows.
\end{proof}

We can use Proposition \ref{prop:root as quotient} to investigate the cohomological invariants of $X_{(V,r)}$, for $X$ smooth. We start with the case of a root stack along an invertible sheaf.
\begin{prop}\label{prop:inv root simple}
    Let $X$ be a smooth scheme over $k$. Consider an invertible sheaf $\Lcal$ over $X$, and let $X_{(\Lcal,r)}$ be the root stack of order $r$ of $X$ along $\Lcal$. Then there is an exact sequence of $\M^{\bullet}(k)$-modules:
    \[ 0 \to \Inv(X,\M) \to \Inv(X_{(\Lcal,r)},\M) \to \ker(c_1(\Lcal))_r \to 0 \]
    where the term on the right stands for the subgroup of $r$-torsion elements in the kernel of $c_1(\Lcal):A^0(X,\M)\to A^1(X,\M)$.
\end{prop}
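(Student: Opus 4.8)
\emph{Proof strategy.} The plan is to use Proposition~\ref{prop:root as quotient simple} to present $X_{(\Lcal,r)}$ as the quotient stack $\left[L^*/\Gm^{\otimes r}\right]$ and then to run the equivariant localization sequence of Chow groups with coefficients attached to the zero section of the total space $L$ of $\Lcal$. Write $G=\Gm^{\otimes r}$ and let $G$ act on $L$ by $s\cdot v=s^r v$, extending the given action on $L^*$; this action is trivial on the zero section $i\colon X\hookrightarrow L$, which is a $G$-invariant divisor with open complement $j\colon L^*\hookrightarrow L$. Using the isomorphism $\Inv(\left[L^*/G\right],\M)\simeq A^0_G(L^*,\M)$ recalled in the introduction, together with $A^{-1}_G(X,\M)=0$, the localization sequence begins
\[ 0 \to A^0_G(L,\M)\xrightarrow{j^*} \Inv(X_{(\Lcal,r)},\M)\xrightarrow{\partial} A^0_G(X,\M)\xrightarrow{i_*} A^1_G(L,\M). \]

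First I would compute the terms not involving $L^*$. Since $\pi\colon L\to X$ is a $G$-equivariant line bundle, homotopy invariance gives an isomorphism $\pi^*\colon A^\bullet_G(X,\M)\xrightarrow{\sim}A^\bullet_G(L,\M)$, with inverse $i^*$ since $\pi\circ i=\id_X$. As $G$ acts trivially on $X$, approximating ${\rm B}G={\rm B}\Gm$ by projective spaces and applying the projective bundle formula for Chow groups with coefficients gives $A^n_G(X,\M)=\bigoplus_{p\geq 0}A^{n-p}(X,\M)\cdot u^p$, where $u$ is the equivariant first Chern class of the weight one character of $G$; in particular $A^0_G(X,\M)=A^0(X,\M)=\Inv(X,\M)$ and $A^1_G(X,\M)=A^1(X,\M)\oplus A^0(X,\M)\cdot u$. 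Substituting, and identifying $A^\bullet_G(L,\M)$ with $A^\bullet_G(X,\M)$ via the isomorphism $i^*$ (so that $i_*$ becomes $i^*\circ i_*$, with the same kernel), the sequence reads
\[ 0 \to \Inv(X,\M) \to \Inv(X_{(\Lcal,r)},\M) \xrightarrow{\partial} \Inv(X,\M) \xrightarrow{i^*\circ i_*} A^1(X,\M)\oplus A^0(X,\M)\cdot u. \]

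The key step is to pin down the last map. By the self-intersection formula, $i^*\circ i_*$ is multiplication by the equivariant first Chern class of the normal bundle $N_{X/L}$; as a line bundle $N_{X/L}$ is $\Lcal$, but $G$ acts on its fibers with weight $r$, so $c_1^G(N_{X/L})=c_1(\Lcal)+ru$. Hence for $\alpha\in A^0(X,\M)$ we get $i^*i_*\alpha=c_1(\Lcal)\cdot\alpha+r\alpha\cdot u$, which vanishes precisely when $c_1(\Lcal)\cdot\alpha=0$ in $A^1(X,\M)$ and $r\alpha=0$, i.e. $\ker(i_*)=\ker\bigl(c_1(\Lcal)\colon A^0(X,\M)\to A^1(X,\M)\bigr)_r$. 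Exactness of the localization sequence then gives that $j^*$ is injective with image $\ker(\partial)$ and that $\partial$ maps onto $\ker(i_*)$, which is exactly the claimed short exact sequence; all maps are $\M^\bullet(k)$-linear, the connecting map $\partial$ being the residue homomorphism (of cycle-module degree $-1$). The main obstacle I anticipate is the bookkeeping of the equivariant structures — in particular being careful that the weight-$r$ scaling action on $N_{X/L}$ contributes the summand $ru$, and not $u$ or $r\,c_1(\Lcal)$ — together with making sure that the equivariant forms of homotopy invariance, the projective bundle formula and the self-intersection formula are all available in Rost's setting, which they are by the references given in the introduction.
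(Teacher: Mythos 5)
Your proposal is correct and follows essentially the same route as the paper: present $X_{(\Lcal,r)}$ as $\left[L^*/\Gm^{\otimes r}\right]$, run the equivariant localization sequence for the zero section of $L$, identify $A^1_{\Gm^{\otimes r}}(L,\M)\simeq A^1(X,\M)\oplus A^0(X,\M)\cdot\xi$, and compute the pushforward as multiplication by $c_1(\Lcal)+r\xi$, whose kernel is exactly $\ker(c_1(\Lcal))_r$. The only cosmetic difference is that you phrase the key computation via the self-intersection formula for the normal bundle with its weight-$r$ action, whereas the paper states the same identity for $\sigma_{0*}$ directly as the equivariant first Chern class.
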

\begin{proof}
    The open embedding of stacks $[L^*/\Gm^{\otimes r}]\hookrightarrow [L/\Gm^{\otimes r}]$ induces the following long exact sequence of equivariant Chow groups with coefficients:
    \[ 0\to A^0_{\Gm^{\otimes r}}(L,\M) \to A^0_{\Gm^{\otimes r}}(L^*,\M)\to A^0_{\Gm^{\otimes r}}(X,\M) \xrightarrow{\sigma_{0*}} A^1_{\Gm^{\otimes r}}(L,\M) \]
    We have an isomorphism 
    \[A^*_{\Gm^{\otimes r}}(L,\M)\simeq A^*_{\Gm^{\otimes r}}(X,\M)\simeq A^*(X,\M)[\xi]
    \]
    where $\xi$ is the generator of $A^*_{\Gm}(\Spec(k))$ as $\Het(k)$-module. It has codimension $1$ and cohomological degree $1$.
    In particular,
    \[A^1_{\Gm^{\otimes r}}(L,\M)\simeq A^1(X,\M)\oplus A^0(X,\M)\cdot\xi. \] 
    With this identification, the morphism $\sigma_{0*}$ is equal to 
    \[ \alpha \mapsto c_1^{\Gm^{\otimes r}}(\Lcal)(\alpha)=c_1(\Lcal)(\alpha)+r\alpha\cdot\xi .\]
    If $\alpha$ is an element in the kernel of $\sigma_{0*}$, then it must necessarily satisfy
    \[ c_1(L)\cap \alpha=0,\quad r\alpha=0 .\]
    We deduce that $\ker(\sigma_{0*})$ is equal to subgroup of $r$-torsion elements of the kernel of $c_1(\Lcal)$.
    
    Combining what we have just found with the long exact sequence above, the Proposition easily follows.
\end{proof}
Next we study the case of a root stack along a divisor.
\begin{thm}\label{thm:root stack}
    Let $X$ be a smooth scheme over $k$. Consider a Cartier divisor $V\subset X$, and let $X_{(V,r)}$ be the root stack of order $r$ of $X$ along $V$. Then there is an exact sequence of $\M^{\bullet}(k)$-modules:
    \[ 0 \to \Inv(X,\M) \to \Inv(X_{(V,r)},\M) \to \ker(i_*)_r \to 0 \]
    where the term on the right stands for the subgroup of $r$-torsion elements in the kernel of $i_*:A^0(V,\M)\to A^1(X,\M)$.
\end{thm}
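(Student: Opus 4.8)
The plan is to combine the quotient presentation of Proposition~\ref{prop:root as quotient} with two localization sequences for Chow groups with coefficients. I would assume that $V$ is smooth — the case needed for the applications — which guarantees that the scheme $Y^*$ is smooth, so that $\Inv(X_{(V,r)},\M)\simeq A^0_\Gm(Y^*,\M)$ by Proposition~\ref{prop:root as quotient}. Using the description $Y^*=\{u^r=\sigma\}\subset L^*\times\bA^1_u$, I would decompose $Y^*$ into the open locus $Y^{**}:=Y^*\cap\{u\neq 0\}=Y^*\cap\{\sigma\neq 0\}$, where $u$ (equivalently $\sigma$) is invertible, and the closed divisor $W:=Y^*\cap\{u=0\}$, which is $L^*|_V$ (with its $\Gm^{\otimes r}$-action) sitting at $u=0$. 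Over $X\setminus V$ the root stack $X_{(V,r)}\to X$ is an isomorphism, and $[Y^{**}/\Gm]$ is precisely this open substack, so $[Y^{**}/\Gm]\simeq X\setminus V$; on the other hand $[W/\Gm]\simeq V_{(\Lcal|_V,r)}$ by Proposition~\ref{prop:root as quotient simple}. Since $W$ has pure codimension $1$ in $Y^*$, the $\Gm$-equivariant localization sequence for $Y^{**}\subset Y^*$ becomes
\[ 0\to\Inv(X_{(V,r)},\M)\to\Inv(X\setminus V,\M)\xrightarrow{\ \partial\ }\Inv(V_{(\Lcal|_V,r)},\M). \]

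The heart of the argument — and the step I expect to be the main obstacle — is identifying the boundary map $\partial$. The morphism $Y^*\to X$ pulls $V$ back to $r\cdot W$: if $s$ is a local equation for $V$ at a codimension-$1$ point $\eta_V$, then near the point $\eta_W$ of $W$ above it the relation $u^r=\sigma$ reads $s=u^r\cdot(\text{unit})$, so the local ring of $Y^*$ at $\eta_W$ dominates that of $X$ at $\eta_V$ as an extension of discrete valuation rings of ramification index $r$, with residue extension $k(\eta_V)\hookrightarrow k(\eta_W)$ the one induced by $\pi\colon V_{(\Lcal|_V,r)}\to V$. By the behaviour of residues under such an extension — visible for symbols in $\K_\M$ through $\{u^r\}=r\{u\}$, and extended to a general cycle module via its $\K_\M$-module structure — I would prove
\[ \partial=r\cdot(\pi^*\circ\partial_V), \]
with $\partial_V\colon\Inv(X\setminus V,\M)=A^0(X\setminus V,\M)\to A^0(V,\M)$ the ordinary residue along $V$ and $\pi^*\colon A^0(V,\M)\to A^0_\Gm(W,\M)=\Inv(V_{(\Lcal|_V,r)},\M)$ the pullback along $\pi$. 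Since $\pi^*$ is injective by Proposition~\ref{prop:inv root simple}, the element $r\cdot\pi^*(\partial_V\alpha)=\pi^*(r\cdot\partial_V\alpha)$ vanishes precisely when $r\cdot\partial_V\alpha=0$, so
\[ \Inv(X_{(V,r)},\M)=\ker(\partial)=\partial_V^{-1}\big(A^0(V,\M)_r\big)\subseteq\Inv(X\setminus V,\M). \]

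Finally I would extract the stated sequence from the localization sequence of $X\setminus V\subset X$, which (using $A^0(-,\M)\simeq\Inv(-,\M)$ for smooth schemes and that $V$ has pure codimension $1$) is exact:
\[ 0\to\Inv(X,\M)\xrightarrow{\ j^*\ }\Inv(X\setminus V,\M)\xrightarrow{\ \partial_V\ }A^0(V,\M)\xrightarrow{\ i_*\ }A^1(X,\M). \]
Restricting $\partial_V$ to the subgroup $\Inv(X_{(V,r)},\M)=\partial_V^{-1}(A^0(V,\M)_r)$, its kernel is $\ker(\partial_V)=\operatorname{Im}(j^*)=\Inv(X,\M)$ and its image is $\operatorname{Im}(\partial_V)\cap A^0(V,\M)_r=\ker(i_*)\cap A^0(V,\M)_r=\ker(i_*)_r$, with surjectivity onto $\ker(i_*)_r$ because any $r$-torsion class in $\operatorname{Im}(\partial_V)$ already lifts into $\partial_V^{-1}(A^0(V,\M)_r)$. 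This yields the exact sequence of $\M^\bullet(k)$-modules
\[ 0\to\Inv(X,\M)\to\Inv(X_{(V,r)},\M)\to\ker(i_*)_r\to 0. \]

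To summarise, the only genuinely delicate point is the identity $\partial=r\cdot(\pi^*\circ\partial_V)$: one must check that the isomorphisms $[Y^{**}/\Gm]\simeq X\setminus V$ and $[W/\Gm]\simeq V_{(\Lcal|_V,r)}$ are compatible with residues and pullbacks, that the ramified-extension formula for residues (transparent in Milnor $\K$-theory) propagates to an arbitrary cycle module through its module structure, and that all of this is compatible with the Edidin--Graham--Totaro approximation defining the equivariant groups. The remaining steps are formal manipulations of exact sequences.
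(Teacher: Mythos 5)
Your proposal follows essentially the same route as the paper: the same quotient presentation and decomposition of $Y^*$ into the open locus $\{u\neq 0\}\simeq X\setminus V$ and the closed locus $\{u=0\}$ giving the root stack over $V$, the same identification of the boundary map as $r\cdot(\pi^*\circ\partial_V)$ (the paper derives this from Rost's rule (R3a) for the degree-$r$ cyclic cover, which is exactly your ramification-index computation), the same appeal to Proposition~\ref{prop:inv root simple} for injectivity of $\pi^*$, and the same final extraction of the sequence from the localization sequence of $V\subset X$. The argument is correct.
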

\begin{proof}
    Recall from Proposition \ref{prop:root as quotient} that $X_{(V,r)}\simeq [Y^*/\Gm]$: here $Y^*$ stands for the closed subscheme in $L^*\times\bA^1$ of equation $\sigma(x,t)=u^r$, where $\sigma:L\to \bA^1$ is determined by the section $\sigma$ of $\Lcal$ defining $V$ and $u$ is a coordinate for $\bA^1$. The group $\Gm$ acts in the standard way on $\bA^1$ and as $\Gm^{\otimes r}$ on $L$. 
    
    Let $Y$ be the closure of $Y^*$ in $L\times\bA^1$. Observe that we have $Y\smallsetminus Y^*\simeq V$, as it is equal to the intersection of the $\sigma^{-1}(0)$ with the image of the zero section $t_0:X\to L$ (the coordinate $u$ must necessarily be equal to zero). Moreover, the scheme $Y$ is $\Gm$-invariant.
    
    %The open embedding $Y^*\hookrightarrow Y$ induces the following exact sequence of equivariant Chow groups with coefficients:
    %\begin{equation} \label{eq:ex seq one}
    %0\to A^0_{\Gm}(Y) \to A^0_{\Gm}(Y^*) \to A^0_{\Gm}(V) \to A^1_{\Gm}(Y)
    %\end{equation}
    
    %Observe that $Y$ admits a $\Gm$-invariant stratification in $\Gm$-equivariant vector bundles over $X$: namely, we have $Y|_{X\smallsetminus V}\simeq X\times\bA^1$ and $Y|_V\simeq V$. This implies that
    %\[ A^*_{\Gm}(Y) \simeq A^*_{\Gm}(X) \simeq A^*(X)[\xi]\]
    %where $\xi$ is as in the proof of Proposition \ref{prop:inv root simple}.
    
    %Plugging these isomorphisms in sequence (\ref{eq:ex seq one}) we get
    %\begin{equation}
    %    0\to A^0(X) \to A^0_{\Gm}(Y^*) \to A^0(V) \xrightarrow{f_*} A^1(X)\oplus A^0(X)\cdot \xi
    %\end{equation}
    %Consider the closed embedding $i:V\hookrightarrow X$
    
    In the following, we denote the root stack $V_{(\Ocal_V(V),r))}$ by $\widetilde{V}$.
    Observe that there is a closed immersion of stacks $\widetilde{V}\hookrightarrow X_{(V,r)}$: the complement of $\widetilde{V}$ in $X_{(V,r)}$ is isomorphic to $X\smallsetminus V$. Therefore we have an exact sequence of Chow groups with coefficients as follows:
    \begin{equation}\label{eq:ex seq root}
        0\to A^0(X_{(V,r)},\M) \to A^0(X_{(V,r)}\smallsetminus \widetilde{V},\M) \simeq A^0(X\smallsetminus V,\M) \to A^0(\widetilde{V},\M)
    \end{equation}
    We need to understand the kernel of the last map in the sequence above.
    
    We claim that there is a factorization 
    \begin{equation}\label{eq:factorization}
    A^0(X_{(V,r)} \smallsetminus \widetilde{V},\M) \simeq A^0(X\smallsetminus V,\M)  \xrightarrow{r\partial_V} A^0(V,\M) \hookrightarrow A^0(\widetilde{V},\M)
    \end{equation}
    where the first map is $r$ times the boundary map, and the second map is the pullback along $\widetilde{V}\to V$, whose injectivity follows from Proposition \ref{prop:inv root simple}.
    
    Let $Z^*$ be the closed subscheme of $Y^*$ where $u=0$, which is isomorphic to $L|_V^*$, and set $U^*:=Y^*\smallsetminus Z^*$. Observe that we have $[Z^*/\Gm^{\otimes r}]\simeq \widetilde{V}$ and $[U^*/\Gm]\simeq X\smallsetminus V$.
    
    %Consider the boundary morphism $A^0_{\Gm}(Z^*)\to A^0_{\Gm}(V)$ induced by the closed embedding of $V$ in $L|_V$ via the zero section: this morphism composed with the other boundary morphism $A^0_{\Gm}(U^*)\to A^0_{\Gm}(Z^*)$ is equal to zero. This follows from the fact that the sequence
    %\[ \Het(k(Y)) \xrightarrow{\oplus \delta_y} \oplus_{y\in Y^{(1)}}\Het(k(y)) \xrightarrow{\oplus \delta_y} \oplus_{y\in Y^{(2)}} \Het(k(y)) \xrightarrow{\delta} \cdots \]
    %is a complex. Indeed, the elements of $A^0_{\Gm}(U^*)$ inject into $A^0(U^*)$, which in turn is contained into $\Het(k(Y))$. Then given a cycle $\alpha$ of $A^0_{\Gm}(U^*)$, we can regard it as an element of $\Het(k(Y))$, and its image via the double boundary morphism 
    %\[ A^0_{\Gm}(U^*) \longrightarrow A^0_{\Gm}(Z^*) \longrightarrow A^0_{\Gm}(V) \]
    %is equal to $(\delta_V\circ\delta_Z)(\alpha)=0$. This shows the factorization property in (\ref{eq:factorization}).
    
    Observe that $f:Y^*\to L^*$ is a cyclic cover of degree $r$ branched along $Z^*$ and ramified over $L^*|_V$. Moreover, $A^0_{\Gm}(U^*,\M)\simeq A^0(X\smallsetminus V,\M)$. Let $g:L^*|_{X\smallsetminus V}\to X\smallsetminus V$ be the projection morphism. Then it follows from we have just observed that every element of $A^0_{\Gm}(U^*,\M)$ is of the form $f^*g^*\alpha$. By \cite{Rost}*{Def. 1.1.(R3a)} we have:
    \[ \partial_{Z^*}(f^*g^*\alpha) =  r \cdot f_V^* (\partial_{L^*|_V} (g^*\alpha)) \]
    where 
    \[ \partial_{L^*|_V}:A^0_{\Gm}(L^*|_{X\smallsetminus V},\M) \longrightarrow A^0_{\Gm}(L^*|_V,\M) \]
    is the boundary map induced by the closed embedding $L^*|_V\hookrightarrow L^*$, and $f_V:Z^*\to L^*|_V$ is the restriction of $f$ to $Z^*$.
    Consider the cartesian diagram
    \[
    \xymatrix{
    L^*|_V \ar[r] \ar[d]^{g_V} & L^* \ar[d] \\
    V \ar[r] & X
    }
    \]
    Then the compatibility formula implies
    \[ r \cdot \partial_{L^*|_V} (g^*\alpha) = r \cdot g_V^* (\partial_V \alpha ) \]
    where $\partial_V:A^0(X\smallsetminus V,\M)\to A^0(V,\M)$ is the boundary map induced by the closed embedding $V\hookrightarrow X$. Putting all together, we have shown that (\ref{eq:factorization}) holds true.
    
    We deduce from (\ref{eq:ex seq root}) that
    \[ \Inv(X_{(V,r)},\M) = \ker \left(r\cdot\partial_V:A^0(X\smallsetminus V,\M) \longrightarrow A^0(V,\M) \right) \]
    It is easy to see that $A^0(X,\M)$ injects into $\ker(r\partial_V)$. The quotient is then a subgroup of $\ker(A^0(V,\M)\to A^1(X,\M))$, and it is precisely the subgroup of $r$-torsion elements. This concludes the proof.
\end{proof}

%{\todo teorema per stack generali}

\begin{thm}\label{thm:root Nis}
Let $\Xcal$ be a smooth algebraic stack over $k$, and let $r\geq2$ be an integer that is not divisible by ${\rm char}(k)$. Let $V \subset \Xcal$ be a divisor. 

Given any smooth-Nisnevich cover $X\xrightarrow{\pi} \Xcal$ of $\Xcal$ by a scheme, let $V' \subset X$ be the inverse image of $V$. Then $\Inv(\Xcal_{(V,r)},\M)$ is equal to the subset of $\Inv(\Xcal\setminus V,\M)$ of invariants $\alpha$ such that $\partial_{V'}(\pi^* \alpha)$ is of $r$-torsion.
\end{thm}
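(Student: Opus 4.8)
The plan is to descend Theorem~\ref{thm:root stack} along a smooth-Nisnevich cover, using that $\Inv(-,\M)$ is a smooth-Nisnevich sheaf. Fix such a cover $\pi\colon X\to\Xcal$ by a scheme, write $V'=\pi^{-1}(V)\subset X$, let $p_1,p_2\colon X\times_\Xcal X\to X$ be the two projections and let $V''\subset X\times_\Xcal X$ be the pullback of $V$; since $\pi$ and the $p_i$ are flat, $X$ and $X\times_\Xcal X$ are smooth schemes over $k$ and $V'$, $V''$ are Cartier divisors on them. The first thing I would do is record the base-change compatibilities of the root stack that the descent requires, all of which follow at once from the modular description recalled in this section: the formation of $\Xcal_{(V,r)}$ commutes with base change, so that $X_{(V',r)}\simeq X\times_\Xcal\Xcal_{(V,r)}$ and $X_{(V',r)}\times_{\Xcal_{(V,r)}}X_{(V',r)}\simeq(X\times_\Xcal X)_{(V'',r)}$, the latter compatibly with the two projections; as smooth-Nisnevich covers are stable under base change, $X_{(V',r)}\to\Xcal_{(V,r)}$ is again a smooth-Nisnevich cover; and the open immersion $\Xcal\smallsetminus V\hookrightarrow\Xcal_{(V,r)}$, the complement of the root divisor $\widetilde V$, pulls back along $\pi$, resp. after a further base change along $p_1$ or $p_2$, to the open immersions $X\smallsetminus V'\hookrightarrow X_{(V',r)}$, resp. $(X\times_\Xcal X)\smallsetminus V''\hookrightarrow(X\times_\Xcal X)_{(V'',r)}$, complementing the respective root divisors.

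Next I would use the sheaf property: $\Inv(\Xcal_{(V,r)},\M)$ is the equalizer of the two restriction maps $\Inv(X_{(V',r)},\M)\rightrightarrows\Inv((X\times_\Xcal X)_{(V'',r)},\M)$, and in particular $\pi^*$ is injective on $\Inv(\Xcal_{(V,r)},\M)$. By Theorem~\ref{thm:root stack}, restriction to the complement of the root divisor identifies $\Inv(X_{(V',r)},\M)$ with the subgroup $\ker(r\partial_{V'})\subseteq A^0(X\smallsetminus V',\M)=\Inv(X\smallsetminus V',\M)$, and likewise over $X\times_\Xcal X$. Since restriction to the dense open substack $\Xcal\smallsetminus V$ of $\Xcal_{(V,r)}$ is injective and is compatible with $\pi^*$, a short diagram chase shows that $\gamma\mapsto\gamma|_{\Xcal\smallsetminus V}$ embeds $\Inv(\Xcal_{(V,r)},\M)$ into the set of those $\alpha\in\Inv(\Xcal\smallsetminus V,\M)$ with $r\partial_{V'}(\pi^*\alpha)=0$, i.e. with $\partial_{V'}(\pi^*\alpha)$ of $r$-torsion.

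For the reverse inclusion, take $\alpha\in\Inv(\Xcal\smallsetminus V,\M)$ with $\partial_{V'}(\pi^*\alpha)$ of $r$-torsion. Then $\pi^*\alpha\in\ker(r\partial_{V'})$, so Theorem~\ref{thm:root stack} supplies a unique $\widetilde\beta\in\Inv(X_{(V',r)},\M)$ with $\widetilde\beta|_{X\smallsetminus V'}=\pi^*\alpha$. To see that $\widetilde\beta$ lies in the equalizer, restrict its two pullbacks to the dense open $(X\times_\Xcal X)\smallsetminus V''$: there they become $(\pi\circ p_1)^*\alpha$ and $(\pi\circ p_2)^*\alpha$, which agree since $\pi\circ p_1=\pi\circ p_2$ by definition of the fibre product; as restriction to a dense open of $(X\times_\Xcal X)_{(V'',r)}$ is injective, the two pullbacks of $\widetilde\beta$ coincide. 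Hence $\widetilde\beta=\pi^*\beta$ for a unique $\beta\in\Inv(\Xcal_{(V,r)},\M)$. Finally $\pi^*(\beta|_{\Xcal\smallsetminus V})=\widetilde\beta|_{X\smallsetminus V'}=\pi^*\alpha$, and $\pi^*$ is injective on $\Inv(\Xcal\smallsetminus V,\M)$ because $X\smallsetminus V'\to\Xcal\smallsetminus V$ is itself a smooth-Nisnevich cover; therefore $\beta|_{\Xcal\smallsetminus V}=\alpha$. This yields the claimed equality of subsets of $\Inv(\Xcal\smallsetminus V,\M)$, and in particular shows the description is independent of the cover $\pi$.

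The step I expect to be the main obstacle is the package of base-change compatibilities assembled in the first paragraph: that a smooth-Nisnevich cover remains one after base change along $\Xcal_{(V,r)}\to\Xcal$, and that $X_{(V',r)}\times_{\Xcal_{(V,r)}}X_{(V',r)}$ is canonically $(X\times_\Xcal X)_{(V'',r)}$ in a way compatible with the root divisors and their complements. Each is elementary and can be read off the explicit presentations of root stacks given earlier in this section, but making the descent argument rigorous is exactly a matter of spelling these out carefully.
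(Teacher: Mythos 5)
Your proposal is correct and follows essentially the same route as the paper: identify $X\times_\Xcal\Xcal_{(V,r)}$ with $X_{(V',r)}$, apply Theorem~\ref{thm:root stack} on the cover, and verify the gluing/cocycle condition by restricting to the dense open $(X\times_\Xcal X)\smallsetminus V''$ where injectivity of restriction to dense opens forces the two pullbacks to agree. You have merely spelled out in full the base-change compatibilities and the descent diagram chase that the paper's proof leaves implicit.
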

\begin{proof}
Let $\alpha$ be an invariant of $\Xcal_{(V,r)}$, and let $X$ be any smooth-Nisnevich cover of $\Xcal$. The fiber product $X \times_{\Xcal} \Xcal_{(V,r)}$ is equal to $X_{(V',r)}$, so that $X_{(V',r)}$ is a smooth-Nisnevich cover of $\Xcal_{(V,r)}$. Then by Theorem \ref{thm:root stack} the ramification $\partial_{V'}(\alpha)$ must be of $r$-torsion.

On the other hand, let $\alpha$ be an invariant of $\Xcal \setminus V$ such that $r\partial_{V'}(\alpha)=0$. We claim that $\alpha$ extends to $\Xcal_{(V',r)}$. Again by Theorem \ref{thm:root stack} $\alpha$ defines a cohomological invariant of $X_{(V',r)}$, so we only need to show that the gluing conditions are satisfied. This is trivially true as a cohomological invariant is zero if and only if it is zero on a dense open subset and we know that the invariant glues on the open subset $X \setminus V'$.
\end{proof}

\begin{ex}
Consider the scheme $\mathbb{P}^1$ with two marked points $0, \infty$. It's well known that $\mathbb{P}^1$ has no nontrivial cohomological invariants. If we take the root stack $\mathbb{P}^1_{(0,r)}$, Theorem \ref{thm:root stack} shows us that the cohomological invariants are still trivial, because the complement $\mathbb{P}^1\setminus 0$ is isomorphic to the affine line $\mathbb{A}^1$, which has trivial invariants. 

On the other hand, if we take the divisor $V=0 + \infty$, we have $\mathbb{P}^1 \setminus V \simeq \Gm$ and for every $r$ 
\[
\Inv(\mathbb{P}^1_{(V,r)},\M)=\M^{\bullet}(k) \oplus \alpha \cdot \M^{\bullet}(k)_{r}.
\]
Another way to see this is to note that $\mathbb{P}^1_{(V,r)} \simeq \left[\mathbb{P}^1/\mu_r\right]$, where $\mu_r$ acts by multiplication on either homogeneous coordinate.
\end{ex}

\section{The stack of stable hyperelliptic curves}\label{sec:stable}

A stable hyperelliptic curve $(C,\iota)$ over $\Spec(k)$ is a stable curve $C$ endowed with an involution $\iota:C\to C$ such that the quotient $f:C\to C/\langle \iota \rangle$ is a curve of genus $0$. 

Let $\overline{\Hcal}_g$ be the stack of stable hyperelliptic curves, i.e. the closure of $\Hcal_g$ inside the stack $\overline{\Mcal}_g$ of stable curves. The irreducible components of the boundary divisor $\partial\overline{\Hcal}_g:=\overline{\Hcal}_g\smallsetminus\Hcal_g$ can be described as follows (see \cite{Cor}*{}):
\begin{itemize}
    \item The irreducible divisor $\Delta_0$ is the closure in $\overline{\Mcal}_g$ of the stack $\Delta_0^o$ of irreducible, singular and stable curves $C\to S$ possessing an involution $\iota$ such that the quotient scheme $C/\iota$ is a curve of genus zero.\\
    \item The irreducible divisor $\Delta_i$, for $i=1,\ldots,\lfloor \frac{g}{2} \rfloor$, is the closure in $\overline{\Mcal}_g$ of the stack $\Delta_i^o$ of singular stable curves $C\to S$ whose geometric fibres have two irreducible components of genus $i$ and $g-i$ meeting at one point, and possessing an involution $\iota$ such that the fibres of the quotient scheme $C/\iota$ have two irreducible components of genus $0$ glued at one point.\\
    \item The irreducible divisor $\Xi_i$, for $i=1,\ldots,\lfloor \frac{g}{2}\rfloor -1$, is the closure in $\overline{\Mcal}_g$ of the stack $\Xi_i^o$ of singular stable curves $C\to S$ whose geometric fibres have two irreducible components of genus $i$ and $g-i$ meeting at two points, and possessing an involution $\iota$ such that the fibres of the quotient scheme $C/\iota$ have two irreducible components of genus $0$ glued at one point.
\end{itemize}

%There is a way to extend the notion of ramification divisor to stable hyperelliptic curves, although some carefulness is needed because of the possible nodes: a neat way for doing this consists in defining the ramification divisor as the support of the cokernel of $f^*\omega_{C/\langle\iota\rangle}\to \omega_{C}$, the natural morphism of dualizing sheaves induced by $f$.

Let $\overline{\Hcal}^{o}_g$ be the open substack of $\overline{\Hcal}_g$ such that the restriction of the divisors $\Delta_i$ and $\Xi_i$ in $\overline{\Hcal}_g^o$ coincide with the stacks $\Delta_i^o$ and $\Xi_i^o$ introduced above. 
Observe that the complement of $\overline{\Hcal}_g^o$ in $\overline{\Hcal}_g$ has codimension $2$. 

We give now an alternative description of the stack $\overline{\Hcal}_g^o$ that resembles the one in \cite{ArsVis}*{Cor. 4.7}. Let $\Fcal_g^o$ be the fibred category over the site of $k$-schemes
\[ \Fcal_g^o(S) := \left\{ (P\to S, \Lcal, s) \right\}, \]
where:
\begin{itemize}
    \item $P\to S$ is a family of twisted conics of rank $\geq 2$, i.e. each geometric fibre is a stacky curve whose coarse moduli space is a conic of rank $\geq 2$, and the only stacky structure allowed is at most a $B\mu_2$ at the node (see \cite{ACV}*{Def. 2.1.4} for the notion of twisted curve).
    \item $\Lcal$ is a line bundle on $P$ of degree $-g-1$ whose degree on each irreducible component is $>1$. Moreover, if the fibre possesses a twisted node, the degree of $\Lcal$ restricted to an irreducible component must be an half-integer (more on this a few lines below).
    \item $s$ is a global section of $\Lcal^{\otimes (-2)}$. For every geometric point $z$ of $S$, if the conic $P_{z}$ is smooth than $s$ can have at most one double root, otherwise no double roots are allowed.
\end{itemize}
More explicitly, if $P\to\Spec(\overline{k})$ is a rank $2$ twisted conic, the restriction of $\Lcal$ to an irreducible component can have degree $\frac{r}{2}$ with $r=3,4,\ldots, 2\lfloor \frac{g}{2} \rfloor +1$. Indeed, consider an auxiliary rank $2$ conic $P'$, so that $P$ is the rigidification of $[P'/\mu_2]$ away from the node (here $\mu_2$ acts by sending $(x:y)\mapsto (x:-y)$). A line bundle of bidegree $(\frac{r}{2},\frac{l}{2})$ can then be thought of as a line bundle on $P$ whose lifting to $P'$ has bidegree $(r,l)$.

The category $\Fcal_g^o$ is actually a Deligne-Mumford stack. Observe that $\Fcal_g^o$ contains an open substack isomorphic to $\Hcal_g$, namely the substack of objects $(P\to S,L,s)$ where $P\to S$ is smooth and the vanishing locus of $s$ is \'{e}tale over the base.
\begin{prop}\label{prop:extended description}
    There is an isomorphism $\overline{\Hcal}_g^o\simeq\Fcal_g^o$ that extends the one of \cite{ArsVis}*{Cor. 4.7}.
\end{prop}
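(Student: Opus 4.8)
The plan is to construct a morphism $\Fcal_g^o \to \overline{\Hcal}_g^o$ and check it is an isomorphism, following the blueprint of \cite{ArsVis}*{Cor. 4.7} and extending it across the boundary. Given an object $(P\to S,\Lcal,s)$ of $\Fcal_g^o$, the section $s$ of $\Lcal^{\otimes(-2)}$ determines a degree-$2$ cyclic cover $C\to P$ (the relative $\underline{\Spec}$ of $\Ocal_P \oplus \Lcal$ with algebra structure twisted by $s$), carrying a natural involution $\iota$ exchanging the two sheets. First I would verify that, under the numerical hypotheses on $\Lcal$ and $s$ (degree $-g-1$, positive degree $>1$ on components, and the restriction on double roots of $s$), the curve $C$ is a stable curve of genus $g$: the genus count is the usual Hurwitz computation on the coarse space, and the stability/nodality follows from the constraints — a double root of $s$ over a smooth point produces a node, the half-integer degree condition at a twisted node matches the local picture of a hyperelliptic node, and forbidding double roots over a singular conic prevents worse-than-nodal singularities. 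The quotient $C/\langle\iota\rangle$ is the coarse space of $P$, a genus-$0$ curve, so $(C,\iota)\in\overline{\Hcal}_g$; one then checks it lands in $\overline{\Hcal}_g^o$ by matching the three boundary types with the rank and the twisted/untwisted node of $P$ (rank $2$ smooth-ish $\mapsto \Delta_0^o$, rank $2$ twisted node $\mapsto \Delta_i^o$ according to the bidegree split, rank $1$ — wait, rank $\geq 2$ only — so the two-node case $\Xi_i^o$ comes from a conic of rank $2$ with the section $s$ arranged appropriately). This defines the functor on objects, and on morphisms it is the obvious functoriality of the cyclic-cover construction.

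Next I would construct the inverse. Given $(C,\iota)\to S$ in $\overline{\Hcal}_g^o$, form the quotient stack $P := [C/\langle\iota\rangle]$ in the sense of twisted curves: away from the fixed locus this is just the scheme-theoretic quotient, and at the finitely many points where $\iota$ fixes a node one gets a $B\mu_2$, producing a twisted conic of rank $\geq 2$ whose coarse space is $C/\iota$. The pushforward $\Lcal := (f_*\Ocal_C)/\Ocal_P$, where $f:C\to P$, is an invertible sheaf on $P$, and the algebra structure on $f_*\Ocal_C = \Ocal_P \oplus \Lcal$ supplies the section $s$ of $\Lcal^{\otimes(-2)}$. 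I would then check the numerical conditions — degree $-g-1$ by the genus formula, degree $>1$ on each component by stability of $C$, half-integer degrees at twisted nodes by the local structure — as well as the double-root conditions on $s$, which translate exactly the statement that $C$ is nodal and that over a singular conic the cover cannot acquire a node at the preimage of the conic's node. The two constructions are mutually inverse by the standard equivalence between degree-$2$ covers with involution and pairs (line bundle, section of its square), so this gives the equivalence of groupoids $\Fcal_g^o(S)\simeq \overline{\Hcal}_g^o(S)$, functorially in $S$; compatibility with base change is immediate since everything is defined by pushforward and relative $\underline{\Spec}$, which commute with arbitrary base change for these flat families. Finally, restricting to the open substack where $P$ is smooth and $s$ has étale zero locus recovers precisely the isomorphism of \cite{ArsVis}*{Cor. 4.7}, so the constructed map genuinely extends it.

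The main obstacle I expect is the bookkeeping at the twisted nodes: one must check carefully that the $B\mu_2$-structure on $P$, the half-integer degree of $\Lcal$, and the local form of $s$ near such a node all fit together so that the cyclic cover $C$ has exactly a node there (rather than a cusp or a disconnected/non-reduced fibre), and conversely that every hyperelliptic stable curve whose node is fixed by $\iota$ gives rise to such a twisted structure. This is the point where the analogy with \cite{ArsVis} — which works over $\Hcal_g$ where no twisted structure is needed — requires genuinely new local analysis, and where the precise numerical constraints built into the definition of $\Fcal_g^o$ earn their keep. The remaining verifications (genus, stability away from twisted nodes, functoriality, base change) are routine adaptations of the smooth case.
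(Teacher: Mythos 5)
Your proposal follows essentially the same route as the paper: the quasi-inverse is the relative $\underline{\operatorname{Spec}}$ of $\Ocal_P\oplus\Lcal$ with algebra structure given by $s$, and the forward direction pushes forward $\Ocal_C$ along the quotient by $\iota$ and splits off the line bundle, with the same numerical checks via Hurwitz and stability. The only presentational difference is that the paper constructs the twisted conic by first taking the full stacky quotient $[C/\iota]$ (which carries $\mu_2$-stabilizers along the entire ramification divisor) and then rigidifying away from the singular locus via \cite{ACV}*{5.1}, whereas you describe the resulting partially-rigidified object directly; this is the same object, and you correctly identify the flatness of $C\to P$ at the twisted nodes as the point requiring the stacky structure.
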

\begin{proof}
Let $(C\to S,\iota)$ be an object of $\overline{\Hcal}_g^o$. We can form the stacky quotient $[C/\iota]$, which is by construction a stacky conic of rank $\geq 2$. As the stacky structure is supported on a divisor, we can apply the rigidification process of \cite{ACV}*{5.1} to rigidify $[C/\iota]$ away from the singular locus: the resulting stack $P\to S$ is a family of twisted conics of rank $\geq 2$.

Observe that the morphisms $C\to [C/\iota]$ and $[C/\iota]\to P$ are both flat, hence so must be their composition $f:C\to P$. In particular, this easily implies that $f_*\Ocal_C$ is a locally free sheaf of rank $2$. Moreover, the inclusion $\Ocal_P \hookrightarrow f_*\Ocal_{C}$ of $\iota$-invariant elements has a splitting given by the Reynolds morphism, hence $f_*\Ocal_C\simeq \Ocal_P\oplus \Lcal$ for some line bundle $L$ on $P$.

The structure of $f_*\Ocal_C$ as $\ZZ / 2\ZZ$-graded algebra determines a morphism $\Lcal^{\otimes 2}\to \Ocal_P$ or, equivalently, a global section $s:\Ocal_P\to \Lcal^{\otimes (-2)}$. The vanishing locus of $s$ coincides with the branch locus of $C\to P$, which has degree $2g+2$ by the Hurwitz formula: this implies that $\Lcal$ has total degree $-g-1$. As $C$ is stable, the degree of $\Lcal$ restricted to every irreducible component must be $\geq 2$. Eventually, stability of $C$ also implies that the vanishing locus of $s$ satisfies the required assumptions.

In this way we have constructed a morphism of stacks $\varphi:\overline{\Hcal}_g^o\to \Fcal_g^o$. To construct its quasi-inverse, we can do the following: given an object $(P\to S,\Lcal,s)$ of $\Fcal_g^o$, define $C:=\USpec_{\Ocal_P}(\Ocal_P\oplus\Lcal)$, where $\Ocal_P\oplus\Lcal$ has the $\Ocal_P$-algebra structure induced by $s^{\vee}:\Lcal^{\otimes 2}\to\Ocal_P$. Moreover, there is a well defined involution on $\Ocal_P\oplus\Lcal$ which acts as the identity on $\Ocal_p$ and as $-\id$ on $\Lcal$. This determines an involution $\iota:C\to C$. It is easy to verify that the family of curves $C\to S$ together with $\iota$ is actually an object of $\overline{\Hcal}_g^o$, and that the induced morphism of stacks $\Fcal_g^o\to\overline{\Hcal}_g^o$ gives a quasi-inverse to $\varphi$.
\end{proof}
\begin{rmk}
The necessity of dealing with twisted conics comes from the fact that the morphism $C\to P$, whenever it ramifies over the node,  cannot be flat unless the node is twisted.

On the other hand, the line bundle $\Lcal^{\otimes (-2)}$ comes from the coarse moduli space of $P$, so the global section $s$ does as well. This shows that we still have a morphism from $\overline{\Hcal}_g^o$ to the stack of conics of rank $\geq 2$ with a line bundle $\Mcal$ of degree $2g+2$ and a global section $s$ of $\Mcal$, but this is not an isomorphism.
\end{rmk}
The divisor $\Delta_0^o$ is sent by the isomorphism of Proposition \ref{prop:extended description} to the closed substack in $\Fcal_g^o$ whose objects are triples $(P\to S,\Lcal,s)$ with $P\to S$ smooth. We denote this divisor by $\Dcal_0^o$.

The divisors $\Delta_i^o$ for $i>0$ are isomorphic to the divisors in $\Fcal_g^o$ whose objects  are $(P\to S,\Lcal, s)$ are such that $P$ is a family of conics of rank $2$ and $\Lcal$ has bidegree $(-\frac{2i+1}{2},-\frac{2g-2i+1}{2})$ on each geometric fibre. In particular every fibre of $P\to S$ must have a twist in the node. We denote these divisors by $\Dcal_i^o$.

On the other hand, the divisors $\Xi_i^o$ are isomorphic to the stacks whose objects are triples $(P\to S,\Lcal,s)$ such that $P\to S$ is a family of rank $2$ conics  and $\Lcal$ has bidegree $(-i,-g-i-1)$ on each fibre. These divisors will be denoted $\Xcal_i^o$.

\section{The stack of admissible double coverings}\label{sec:admissible}
In the following, a \emph{nodal curve} is a connected, reduced and proper scheme $C$ over $\Spec(k)$ of dimension $1$ whose singularities are all nodal, i.e. the formal completion of the local ring of $C$ at a singular point $p$ is isomorphic to $k[[x,y]]/(xy)$, for every singular point of $C$.

A \emph{family of nodal curves} is a proper and flat morphism of schemes $C\to S$ such that every geometric fibre is a nodal curve.
\begin{df}[\cite{HM}*{Sec. 4}]\label{def:admissible covering}
An \emph{admissible double covering} of genus $g$ is a finite morphism $C\to P$ between a genus $g$ nodal curve $C$ and a genus $0$ nodal curve $P$ such that:
\begin{itemize}
    \item on the complement of the singular locus of $P$, the morphism $C\to P$ has degree $2$.
    \item The image of every node $q$ of $C$ is a node $p$ of $P$, and we have that
    \[ \widehat{\Ocal_{V,p}}\simeq k[[x,y]]/(xy) \longrightarrow \widehat{\Ocal_{C,q}}\simeq k[[u,v]]/(u,v) \]
    sends $x\mapsto u^e$ and $y\mapsto v^e$ with $1\leq e \leq 2$.
    \item Only a finite number of automorphisms of $P$ fixes both the ramification divisor of $C\to P$ and the singular locus.
\end{itemize}
Given a scheme $S$, a \emph{family of admissible double coverings} of genus $g$ over $S$ is an $S$-morphism $C\to P$ between families of nodal curves over $S$ such that over every geometric point $\overline{s}$ of $S$ the morphism $C_{\overline{s}}\to P_{\overline{s}}$ is an admissible covering of degree $2$ and genus $g$.
\end{df}
\begin{rmk}
The definition above differs from the one of \cite{HM} because we do not require to have $2g+2$ sections $\sigma_i:\Spec(k)\to P$ such that the ramification divisor, away from the nodes, can be written as $\sum \sigma_i$.
\end{rmk}

We can form the fibred category $\widetilde{\Hcal}_g$ of admissible double coverings of genus $g$, that is
\[ \widetilde{\Hcal}_g (S) := \left\{ (C \to P \to S) \right\},  \]
where a morphism $(C\to P\to S)\to (C'\to P'\to S')$ consists of a morphism $S\to S'$ and two isomorphisms $C\simeq C'_S$ and $P\simeq P'_S$ which commute with the covering maps.

The stack $\widetilde{\Hcal}_g$ is a smooth Deligne-Mumford stack. One way to see this is to consider the stack $\Bcal_{0,2g+2}(S_{2})$ of twisted admissible coverings (see \cite{ACV}*{2.2}), which is isomorphic to the stack of admissible coverings of degree $2$ of Harris-Mumford by \cite{ACV}*{Prop. 4.2.2}. The stack $\Bcal_{0,2g+2}(S_2)$ is a smooth Deligne-Mumford stack, and the automorphism group of every twisted $S_2$-admissibe covering contains the symmetric group $S_{2g+2}$, which acts by permutation on the markings. By \cite{ACV}*{5.1} we can take the rigidification $[\Bcal_{0,2g+2}(S_2)/\Bcal S_{2g+2}]$, and it is easy to see that this last stack is isomorphic to $\widetilde{\Hcal}_g$.

We can recast Definition \ref{def:admissible covering} using the notion of admissible involution.
\begin{df}[\cite{Bea}*{Sec. 3}]\label{df:admissible inv}
    An \emph{admissible involution} of a nodal curve $C$ over $\Spec(k)$ is an involution $\iota:C\to C$ such that:
    \begin{itemize}
        \item no irreducible component is fixed by $\iota$.
        \item The quotient $C/\langle \iota\rangle$ is a nodal curve of genus $0$.
        \item If a node is fixed by $\iota$, then $\iota$ acts on the local ring $\Ocal_{C,p}$, hence on its formal completion $\widehat{\Ocal_{C,p}}$, and it does not switch the two branches of the node.
    \end{itemize}
\end{df}
The Definition above extends in an obvious way to families of nodal curves.

\begin{prop}\cite{Sca}*{Prop. 3.3.7}\label{prop:cov=inv}
    Let $C\to P\to S$ be a family of admissible double coverings over $S$. Then there exists a unique admissible involution $\iota:C\to C$ over $S$ such that $P=C/\langle \iota \rangle$.
    
    Viceversa, given an admissible involution $\iota:C\to C$ of a family of nodal curves, the morphism $C\to C/\langle \iota \rangle$ is an admissible double covering.
\end{prop}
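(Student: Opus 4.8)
The plan is to prove the two directions of the equivalence separately: for the forward direction I will build the involution on the degree-$2$ locus by a trace formula and then extend it across the nodes, and for the converse I will compute the quotient map on formal local rings.

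\textbf{From coverings to involutions.} Let $f \colon C \to P$ be a family of admissible double coverings. Over the open subscheme $P^{\circ} \subseteq P$ obtained by removing the nodes of the fibres, $f$ is finite of degree $2$, so $f_{*}\Ocal_{C}$ is a locally free $\Ocal_{P^{\circ}}$-algebra of rank $2$; since ${\rm char}(k)\neq 2$, the assignment $a \mapsto {\rm tr}_{f}(a)-a$ is an $\Ocal_{P^{\circ}}$-algebra involution, hence defines an $S$-involution $\iota^{\circ}$ of $f^{-1}(P^{\circ})$ with $f^{-1}(P^{\circ})/\langle\iota^{\circ}\rangle = P^{\circ}$, and it is the unique nontrivial deck transformation of $f^{-1}(P^{\circ}) \to P^{\circ}$ on each connected component. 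By Definition \ref{def:admissible covering} every node of a fibre of $C$ maps to a node of $P$, so $f^{-1}(P^{\circ})$ is fibrewise dense in $C$, and it remains to extend $\iota^{\circ}$ across the nodes and to check that the extension is still a quotient presentation. Using the local models of Definition \ref{def:admissible covering}: over a node $p$ of $P$ with local model $x \mapsto u^{e},\ y \mapsto v^{e}$, if $e=1$ there are two nodes $q_{1},q_{2}$ of $C$ above $p$ at which $f$ is a local isomorphism, and $\iota^{\circ}$ extends uniquely by swapping the formal branches of $q_{1}$ with those of $q_{2}$; if $e=2$ there is a single node $q$ above $p$ with $\widehat{\Ocal}_{C,q}\simeq k[[u,v]]/(uv)$ and $\widehat{\Ocal}_{P,p}\simeq k[[u^{2},v^{2}]]/(u^{2}v^{2})$, and $\iota^{\circ}$ extends uniquely to $u\mapsto -u,\ v\mapsto -v$. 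These local extensions agree with $\iota^{\circ}$ on the punctured branches, and since $C$ is reduced with $f^{-1}(P^{\circ})$ fibrewise dense they glue to a unique $S$-involution $\iota\colon C\to C$. In both local models one checks directly that $(f_{*}\Ocal_{C})^{\langle\iota\rangle}=\Ocal_{P}$, so $P = C/\langle\iota\rangle$; and $\iota$ is admissible in the sense of Definition \ref{df:admissible inv} because no component is fixed (the generic behaviour is free), the quotient has genus $0$, and the fixed nodes are exactly the $e=2$ nodes, at which $u\mapsto -u,\ v\mapsto -v$ does not exchange the branches.

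\textbf{From involutions to coverings.} Conversely, let $\iota\colon C\to C$ be an admissible involution of a family of nodal curves over $S$ and set $P:=C/\langle\iota\rangle$. Since $\langle\iota\rangle$ is finite the quotient exists as a scheme, $f\colon C\to P$ is finite and surjective, and the second bullet of Definition \ref{df:admissible inv} says precisely that $P\to S$ is a family of nodal curves with genus $0$ geometric fibres. Away from the fixed locus, $f$ is étale of degree $2$. At a point $q$ of a geometric fibre fixed by $\iota$ I compute the quotient on formal local rings: if $q$ is a smooth point, tameness of $\ZZ/2$ in residue characteristic $\neq 2$ shows $\widehat{\Ocal}_{P,f(q)}$ is again regular and $f$ is ramified of degree $2$ there; if $q$ is a node then, since no component is fixed, $\iota$ acts nontrivially on each branch of $\widehat{\Ocal}_{C,q}\simeq k[[u,v]]/(uv)$ without exchanging them, so up to a change of coordinates $\iota$ is $u\mapsto -u,\ v\mapsto -v$ and $\widehat{\Ocal}_{P,f(q)}\simeq k[[u^{2},v^{2}]]/(u^{2}v^{2})$, which is the local model with $e=2$; finally a node $q$ not fixed by $\iota$ is exchanged with a distinct node $q'$, at which $f$ identifies $\widehat{\Ocal}_{C,q}\times\widehat{\Ocal}_{C,q'}$ with its diagonal, giving the local model with $e=1$. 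In particular every node of $C$ maps to a node of $P$, and conversely a node of $P$ arises only in this way because the quotient of a smooth point is smooth; so $C\to P$ is an admissible double covering once we observe that the group of automorphisms of each fibre $P$ fixing the ramification and singular loci is finite, which follows from the finiteness of ${\rm Aut}(C)$ on fibres — equivalently from the Deligne--Mumford property of $\widetilde{\Hcal}_{g}$ recorded above. The two constructions are mutually inverse by construction, and the involution produced in the forward direction is unique because its restriction to the fibrewise-dense open $f^{-1}(P^{\circ})$ is forced.

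\textbf{Main difficulty.} The substantive content is the local analysis at the nodes: identifying the two models $e=1,2$, proving that the trace involution extends uniquely across them (note that $f$ fails to be flat at the $e=2$ nodes, so one genuinely works with the explicit formal local rings rather than with a flat degree-$2$ algebra), and verifying $(f_{*}\Ocal_{C})^{\langle\iota\rangle}=\Ocal_{P}$ integrally and in families. Everything else — finiteness of $f$, the genus-$0$ condition, and the automorphism-finiteness condition — is either immediate from the definitions or reduces to the case of a geometric fibre using the $S$-flatness of $C$ and $P$. Since the statement is \cite{Sca}*{Prop. 3.3.7}, the remaining work is largely a matter of matching conventions with that reference.
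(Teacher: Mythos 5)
The paper offers no proof of this statement: it is imported verbatim from Scavia's thesis (\cite{Sca}*{Prop.\ 3.3.7}), so there is no internal argument to compare yours against. What you propose is the standard argument (trace involution over the smooth locus of $P$, explicit extension in the two formal local models, invariant-ring computation for the converse), and at the level of geometric fibres it is essentially sound; the computations at the $e=2$ nodes, including the observation that $f$ fails to be flat there, are correct. Two steps, however, are not consequences of the definitions as stated in this paper and need to be addressed rather than deferred to ``matching conventions.'' First, in the forward direction you assert that over an $e=1$ node of $P$ there sit exactly two nodes of $C$. Definition \ref{def:admissible covering} only says that nodes of $C$ map to nodes of $P$, not that every point of $C$ over a node of $P$ is a node: a priori a smooth point of $C$ could lie over a node of $P$ (a component mapping $2{:}1$ onto a single branch, totally ramified there). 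Your extension of $\iota^{\circ}$ survives this configuration, but your case analysis as written does not cover it, so either rule it out or add the corresponding local extension.

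Second, and more seriously, in the converse direction the finiteness condition (the third bullet of Definition \ref{def:admissible covering}) is \emph{not} implied by Definition \ref{df:admissible inv} alone: for $C=\mathbb{P}^1$ with $\iota(z)=-z$ all three conditions of Definition \ref{df:admissible inv} hold, yet the quotient $\mathbb{P}^1$ with its two branch points and empty singular locus has a $\Gm$ of automorphisms fixing both, so condition (iii) fails. Some genus or stability hypothesis on $C$ (implicit in \cite{Sca}, and reflected in the semistability statement quoted right after the proposition) is needed, and your appeal to the Deligne--Mumford property of $\widetilde{\Hcal}_g$ is circular here, since in this paper that property is obtained precisely from the identification of $\widetilde{\Hcal}_g$ with a rigidified stack of admissible covers, which presupposes knowing what the objects of $\widetilde{\Hcal}_g$ are. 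Finally, the passage to families (existence, not just uniqueness, of the extension of $\iota$ over the relative singular locus, and base-change compatibility of $C/\langle\iota\rangle$, which uses tameness of $\ZZ/2\ZZ$ in characteristic $\neq 2$) is asserted rather than carried out; uniqueness does follow from separatedness and fibrewise density as you say, but existence in families deserves an argument.
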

The following results give a somewhat more concrete description of how an admissible double covering looks like.
\begin{prop}\cite{Sca}*{Lemma 4.1.2, Prop. 4.1.3}
    Let $C\to P$ be an admissible double covering, and let $\iota:C\to C$ be the admissible involution of Proposition \ref{prop:cov=inv}. Then:
    \begin{enumerate}
        \item the nodal curve $C\to S$ is semistable. 
        \item Every irreducible component of $C$ is smooth. Moreover, if $\iota(C_i)=C_i$, then $C_i\to C_i/\langle\iota\rangle$ is a hyperelliptic curve, otherwise $C_i$ and $\iota(C_i)$ are two disconnected projective lines.
        \item Every non-stable component is a projective line intersecting the rest of the curve in two points, which get switched by $\iota$.
    \end{enumerate}
\end{prop}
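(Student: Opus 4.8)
The plan is to reduce to geometric fibres — semistability and the statements about irreducible components are fibrewise — and then to analyse the double cover $f\colon C\to P$ component by component over $P$. Since $P$ has arithmetic genus zero, it is a tree of projective lines; write $P=\bigcup_j P_j$ with $P_j\cong\mathbb{P}^1$, let $P_j^\circ$ be the complement in $P_j$ of the nodes of $P$ lying on it, and set $C_j:=f^{-1}(P_j)$. Because the singularities of $C$ are nodes and every node of $C$ maps to a node of $P$, no node of $C$ lies over $P_j^\circ$, so $f$ restricts to a finite flat morphism of degree $2$ from the smooth curve $C_j^\circ:=f^{-1}(P_j^\circ)$ to $P_j^\circ$; such a cover is either connected, in which case $\iota$ acts on it as the deck involution, or a disjoint union of two copies of $P_j^\circ$ swapped by $\iota$.

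From this dichotomy I would first read off part (2). In the connected case the closure $C_i$ of $C_j^\circ$ is a single $\iota$-invariant irreducible component, smooth because $C$ is nodal, and $f$ induces $C_i\to C_i/\langle\iota\rangle\cong P_j\cong\mathbb{P}^1$, exhibiting $C_i$ as a double cover of $\mathbb{P}^1$ — a hyperelliptic curve (of genus $0$, $1$ or $\geq 2$) on which $\iota$ restricts to the hyperelliptic involution. In the disconnected case $C_j$ is a union of two disjoint copies of $\mathbb{P}^1$ exchanged by $\iota$, each mapping isomorphically to $P_j$. In all cases every irreducible component of $C$ is smooth, which is the remaining assertion of (2). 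This also shows (1) up to the rational components: the nodal curve $C$ is semistable precisely when every smooth rational component meets the rest of $C$ in at least two points.

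Parts (1) and (3) I would then obtain from one counting argument against Definition \ref{def:admissible covering}(3). Suppose $C_i\cong\mathbb{P}^1$ is a rational component meeting the rest of $C$ in $m$ points, and let $P_j=f(C_i)$. If $C_i$ belongs to a pair of disjoint rational components swapped by $\iota$, then $f$ has no branch points over $P_j^\circ$ and at most $m$ nodes of $P$ lie on $P_j$. If instead $\iota(C_i)=C_i$, then $\iota|_{C_i}$ is an involution of $\mathbb{P}^1$ with exactly two fixed points, which are the two branch points of $C_i\to P_j$; the $m$ nodes of $C$ on $C_i$ are permuted by $\iota$, and unless $m=2$ and $\iota$ exchanges them, each such node is $\iota$-fixed, hence maps to one of the two branch points, so $P_j$ carries at most two special points — nodes of $P$ on $P_j$ together with points of the branch divisor. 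In all these situations an automorphism of $\mathbb{P}^1$ fixing the (at most two) special points of $P_j$, extended by the identity on the other components of $P$, yields a positive-dimensional group of automorphisms of $P$ preserving the branch divisor and the singular locus, contradicting Definition \ref{def:admissible covering}(3). Taking $m\leq 1$ gives semistability (part (1)); taking $m=2$ forces $\iota(C_i)=C_i$ with $\iota$ exchanging the two nodes of $C_i$ (part (3)).

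The main obstacle is the local bookkeeping over the nodes of $P$ that makes the special-point count rigorous: one must enumerate the \'etale-local pictures of $f$ above a node $p$ of $P$ — using finite flatness of degree $2$ together with the constraint $1\leq e\leq 2$ for nodes of $C$ in Definition \ref{def:admissible covering} — to determine, in each configuration, how many branches of $C$ lie over $p$, whether $p$ lies on the branch divisor, and which of the incident components of $C$ are $\iota$-invariant; once this local dictionary is in place, the arguments above go through and also secure the smoothness of the components used in (2). This is precisely the content of \cite{Sca}*{Lemma 4.1.2, Prop. 4.1.3}, whose proof we follow.
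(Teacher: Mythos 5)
The paper does not actually prove this statement: it is quoted from \cite{Sca}*{Lemma 4.1.2, Prop. 4.1.3} with no in-text argument, so there is nothing internal to compare your proof against. Your sketch follows what is essentially the standard route --- decompose $P$ into its components, classify the degree-$2$ cover over each punctured component $P_j^\circ$ as connected or split, then play the count of special points on each $P_j$ against the finiteness-of-automorphisms clause of Definition \ref{def:admissible covering} --- and the global logic is sound.

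Two points that you fold into the ``local bookkeeping'' deserve to be named explicitly, because the argument genuinely turns on them. First, smoothness of components in (2): ``smooth because $C$ is nodal'' is not enough, since a nodal curve can perfectly well have an irreducible component with a self-node. What rules this out is that $P$ has arithmetic genus $0$, hence is a tree of projective lines, so the two branches of $P$ at any node lie on distinct components of $P$; since each irreducible component of $C$ dominates a single component of $P$, and at a node of $C$ the two branches map to the two branches of the image node of $P$, the two branches of $C$ at any node lie on distinct components of $C$. Second, your upper bounds ``at most $m$ nodes of $P$ lie on $P_j$'' and ``$P_j$ carries at most two special points'' are valid only if \emph{every} node of $P$ on $P_j$ lies under a node of $C$ on $C_i\cup\iota(C_i)$ --- and here the inequality goes the dangerous way: an unaccounted-for node of $P$ on $P_j$ would add a special point and destroy the contradiction you are after. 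This fact is not part of Definition \ref{def:admissible covering} as stated (which only constrains where nodes of $C$ go, not what sits over nodes of $P$); it follows from the quotient description $P=C/\langle\iota\rangle$ of Proposition \ref{prop:cov=inv}, because the image of a smooth point of $C$ under the quotient by an involution in characteristic $\neq 2$ is a smooth point of $P$. With these two facts supplied, your special-point count goes through and recovers (1)--(3) as claimed.
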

An immediate consequence of the Proposition above is that the admissible involution $\iota$ of an admissible double covering $C\to P$ descends to a hyperelliptic involution $\overline{\iota}$ of the stable model $\overline{C}$ of $C$: in other terms, the stable curve $\overline{C}$ is a stable hyperelliptic curve. This argument works for families too.
\begin{cor}
    Let $C\to P\to S$ be a family of admissible double covering, and let $\overline{C}\to S$ be its stable model. Then the admissible involution $\iota:C\to C$ descends to a hyperelliptic involution $\overline{\iota}:\overline{C}\to\overline{C}$, and the pair $(\overline{C}\to S,\overline{\iota})$ is a family of stable hyperelliptic curves.
\end{cor}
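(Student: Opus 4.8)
The plan is to bootstrap the fibrewise statement recorded just above --- that for a single admissible double covering $C\to P$ the stable model of $C$ inherits a hyperelliptic involution --- to the relative setting, the only real point being that the stable-model construction is canonical and commutes with base change.

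First I would invoke that, since $C\to S$ is a family of semistable curves of genus $g\geq 2$ (semistability being part of the structure results quoted above), it has a stable model $c\colon C\to\overline{C}$ over $S$, which one may produce canonically, for instance as $\overline{C}=\underline{\mathrm{Proj}}_{\Ocal_S}\bigl(\bigoplus_{n\geq 0}\pi_{*}\omega_{C/S}^{\otimes n}\bigr)$. For $g\geq 2$ the sheaves $\pi_{*}\omega_{C/S}^{\otimes n}$ with $n\geq 2$ commute with base change, so $c$ does as well; in particular $\overline{C}_{\bar s}$ is the stable model of the fibre $C_{\bar s}$ for every geometric point $\bar s$. Being canonical, the formation of the stable model is functorial in the family: an automorphism of $C/S$ acts on $\omega_{C/S}$, hence on the graded $\Ocal_S$-algebra above, hence on $\overline{C}$. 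Applying this to $\iota$ produces an $S$-automorphism $\overline{\iota}\colon\overline{C}\to\overline{C}$ with $c\circ\iota=\overline{\iota}\circ c$, and since $\iota\mapsto\overline{\iota}$ is a group homomorphism $\mathrm{Aut}(C/S)\to\mathrm{Aut}(\overline{C}/S)$ it sends the involution $\iota$ to an element of order dividing $2$, so $\overline{\iota}$ is an involution. Concretely, $c$ contracts exactly the connected configurations of non-stable components --- the rational bridges of the structure result quoted above --- and $\iota$ permutes these among themselves, which is what makes $c$ equivariant.

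Next I would identify $(\overline{C}\to S,\overline{\iota})$ as a family of stable hyperelliptic curves. By construction $\overline{C}\to S$ is a family of stable curves, and it has genus $g$ because contracting trees of rational curves leaves the arithmetic genus of the fibres unchanged. For a geometric point $\bar s$, compatibility with base change identifies $\overline{C}_{\bar s}$ with the stable model of the semistable hyperelliptic curve $C_{\bar s}$ and $\overline{\iota}_{\bar s}$ with the involution induced on it by the admissible involution $\iota_{\bar s}$; by the fibrewise statement recorded above --- equivalently, because the quotient $\overline{C}_{\bar s}/\langle\overline{\iota}_{\bar s}\rangle$ is obtained from the genus $0$ curve $C_{\bar s}/\langle\iota_{\bar s}\rangle$ by contracting the rational curves which are the images of the contracted components, hence is again of genus $0$ --- the pair $(\overline{C}_{\bar s},\overline{\iota}_{\bar s})$ is a stable hyperelliptic curve. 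Thus every geometric fibre of $(\overline{C}\to S,\overline{\iota})$ is a stable hyperelliptic curve, which together with the first two paragraphs is precisely the assertion that $(\overline{C}\to S,\overline{\iota})$ is a family of stable hyperelliptic curves.

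The step I expect to require the most care is the one compressed above into the words ``canonical'' and ``commutes with base change'': one must check that the stable-model functor carries automorphisms of the family to automorphisms of the stable model and is compatible with base change, so that the statement genuinely reduces to the explicit fibrewise picture (contraction of the rational bridges, with $\iota$ acting on their two switched nodes as in the structure result). Granting this, the remaining points --- that $\overline{\iota}$ is an involution and that the fibres stay of genus $g$ with genus $0$ quotient --- are formal.
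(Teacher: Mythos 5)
Your proposal is correct and follows essentially the same route the paper takes (the paper simply asserts that the fibrewise consequence of the structure results ``works for families too'' because stabilization commutes with base change); you have merely made explicit the standard facts that the relative stable model can be built functorially via $\underline{\mathrm{Proj}}$ of the relative canonical ring, that its formation commutes with base change for $g\geq 2$, and that $\iota$ permutes the contracted rational bridges. No further comment is needed.
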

The stabilization morphism commutes with base change, hence the Corollary above determines a morphism of stacks
\[ \varphi: \widetilde{\Hcal}_g \longrightarrow \overline{\Hcal}_g,\quad (C\to P\to S)\longmapsto (\overline{C}\to S,\overline{\iota}) \]
called \emph{the collapsing morphism} \cite{Sca}*{Def. 4.1.13}. 

Observe that the collapsing morphism induces a bijection at the level of geometric points, and it is obviously an isomorphism over the open substack of smooth curves. This in turn implies that the irreducible components of the boundary $\widetilde{\Hcal}_g\smallsetminus \overline{\Hcal}_g$ are $\widetilde{\Delta}_i:=\varphi^{-1}(\Delta_i)$ for $i=0,1,\ldots,\lfloor\frac{g}{2}\rfloor$ and $\widetilde{\Xi}_j:=\varphi^{-1}(\Xi_j)$ for $j=1,2,\ldots,\lfloor \frac{g}{2}\rfloor$.

Recall that in Section \ref{sec:stable} we defined the open substack  $\overline{\Hcal}_g^o$ of $\overline{\Hcal}_g$. Set then $\widetilde{\Hcal}_g^o:=\varphi^{-1}(\overline{\Hcal}_g^o)$. As for the case of stable hyperelliptic curves, the complement of $\widetilde{\Hcal}_g^o$ in $\widetilde{\Hcal}_g$ has codimension $2$, and the irreducible components of $\widetilde{\Hcal}_g^o\smallsetminus \Hcal_g$ are $\widetilde{\Delta}_i^o:=\varphi^{-1}(\Delta_i^o)$ and $\widetilde{\Xi}_j^o:=\varphi^{-1}(\Xi_j^o)$.

Observe that the collapsing map induces an isomorphism of stacks for almost all the irreducible components of the boundary of $\widetilde{\Hcal}_g^o$ and $\overline{\Hcal}_g^o$, with the only exception of $\widetilde{\Delta}_0^o\to \Delta_0^o$: indeed, compared to its stable model, an admissible covering in $\widetilde{\Delta}_0^o$ has an additional automorphism, namely the involution of the projective line that switches the two intersection points.
\begin{prop}\label{prop:is a root}
    The stack $\widetilde{\Hcal}_g^o$ is a root stack of $\overline{\Hcal}_g^o$ of order $2$ along the divisor $\Delta_0^o$.
\end{prop}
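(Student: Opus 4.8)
The plan is to identify $\widetilde{\Hcal}_g^o$ with the root stack $\mathcal{R}:=(\overline{\Hcal}_g^o)_{(\Delta_0^o,2)}$ by constructing a morphism $\Psi\colon\widetilde{\Hcal}_g^o\to\mathcal{R}$ over $\overline{\Hcal}_g^o$ and checking that it is an isomorphism. Recalling the modular description of a root stack along a divisor from Section \ref{sec:root}, a map from a stack $T$ to $\mathcal{R}$ is the same as a map $T\to\overline{\Hcal}_g^o$ together with a square root of the pair $(\Ocal(\Delta_0^o)|_T,\sigma_{\Delta_0^o}|_T)$, where $\sigma_{\Delta_0^o}$ is the tautological section cutting out $\Delta_0^o$. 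Since the collapsing morphism $\varphi$ already furnishes the map to $\overline{\Hcal}_g^o$, I would only need to produce such a square root on $\widetilde{\Hcal}_g^o$; the claim is that it is $(\Ocal(\widetilde{\Delta}_0^o),\sigma_{\widetilde{\Delta}_0^o})$, in other words that $\varphi^{-1}(\Delta_0^o)=2\,\widetilde{\Delta}_0^o$ as Cartier divisors.

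To verify this equality I would argue étale-locally on $\overline{\Hcal}_g^o$, using the description $\overline{\Hcal}_g^o\simeq\Fcal_g^o$ of Proposition \ref{prop:extended description}: near a point of $\Dcal_0^o$ the conic $P$ stays smooth, and $\Dcal_0^o$ is the vanishing locus of the discriminant $\delta$ of the section $s$ of $\Lcal^{\otimes -2}$ defining the branch divisor — equivalently, $\delta$ is the parameter smoothing the node of the corresponding stable hyperelliptic curve. The double cover $\USpec_{\Ocal_P}(\Ocal_P\oplus\Lcal)$ degenerates, over $\delta=0$, to a curve with a node lying over a \emph{smooth} point of $P$, and the admissible covering is obtained from it by the standard semistable modification that bubbles off a $\mathbb{P}^1$ from $P$ at that point and separates the two colliding branch points onto the bubble; in a one-parameter family transverse to $\Dcal_0^o$, in suitable coordinates on the bubble these two branch points sit at $\pm\sqrt{\delta}$. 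This shows that the family of admissible coverings exists precisely after adjoining $\sqrt{\delta}$, which both proves $\varphi^{*}\delta$ is locally a square with $\varphi^{-1}(\Delta_0^o)=2\,\widetilde{\Delta}_0^o$, and identifies the residual involution $\sqrt{\delta}\mapsto-\sqrt{\delta}$ — i.e.\ the tautological $\mu_2$-automorphism of the root stack along $\Delta_0^o$ — with the involution of the bubble exchanging its two branch points, which is exactly the extra automorphism of an object of $\widetilde{\Delta}_0^o$ singled out in the discussion preceding the statement.

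Granting the square root, $\Psi\colon\widetilde{\Hcal}_g^o\to\mathcal{R}$ is defined, and it remains to see it is an isomorphism. It is the identity over $\overline{\Hcal}_g^o\smallsetminus\Delta_0^o$, hence birational; both $\widetilde{\Hcal}_g^o$ and $\mathcal{R}$ are smooth (so normal) Deligne--Mumford stacks, proper over $\overline{\Hcal}_g^o$, carrying a $\mu_2$ in their inertia exactly along the preimage of $\Delta_0^o$, and by the previous step $\Psi$ carries one copy of $\mu_2$ isomorphically onto the other, so $\Psi$ is representable; being proper and quasi-finite it is finite, and a finite birational morphism between normal stacks is an isomorphism. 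The hard part will be the middle paragraph: making the semistable modification precise in families so as to describe the \emph{universal} admissible covering in a neighbourhood of $\Delta_0^o$, confirming that the square root of the local equation of $\Delta_0^o$ it forces is exactly the one recorded by $\widetilde{\Delta}_0^o$, and matching the resulting $\mu_2$ with the bubble involution; once this local analysis is done, the global identification is formal.
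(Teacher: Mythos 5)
Your strategy is sound, but it is worth saying up front that the paper does not prove this statement at all: its ``proof'' is the single line ``It follows directly from \cite{Sca}*{Prop.\ 4.4.5}'', so you are reconstructing an argument the authors outsourced to Scavia's thesis. What you propose is essentially the standard way to establish such a statement, and the skeleton is correct: the universal property of the root stack reduces everything to exhibiting $(\Ocal(\widetilde{\Delta}_0^o),\sigma_{\widetilde{\Delta}_0^o})$ as a square root of $\varphi^*(\Ocal(\Delta_0^o),\sigma_{\Delta_0^o})$, i.e.\ to the Cartier-divisor identity $\varphi^*\Delta_0^o=2\,\widetilde{\Delta}_0^o$ together with the matching of the two $\mu_2$'s, after which the finite-birational-onto-normal argument closes the proof. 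You correctly identify where the real content lies: one must show both that the ramification index of $\varphi$ along $\widetilde{\Delta}_0^o$ is exactly $2$ (your $\pm\sqrt{\delta}$ picture, which is the right local model: over a transverse disc with parameter $\delta$ the branch divisor $x^2=\delta$ is irreducible, so the admissible cover exists only after the base change $\delta=t^2$) \emph{and} that $\widetilde{\Delta}_0^o$ is reduced with $t$ a genuine versal parameter, since otherwise the equality of Cartier divisors does not follow from the ramification count alone; the cleanest way to supply this is the deformation theory of twisted/admissible covers from \cite{ACV}, which the paper already invokes to prove smoothness of $\widetilde{\Hcal}_g$, or the explicit local computation in \cite{Sca}. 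Two small points to tighten: smoothness of the root stack $\mathcal{R}$ requires $\Delta_0^o$ to be smooth, which is not needed --- normality of $\mathcal{R}$ suffices for your last step and holds because $\Delta_0^o$ is a reduced divisor in a smooth stack; and representability of $\Psi$ should be deduced from injectivity on automorphism groups, which is exactly the identification of the bubble involution with the tautological $\mu_2$ that your local analysis provides. With those details filled in, your argument is a complete proof, and arguably more informative than the paper's citation.
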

\begin{proof}
    It follows directly from \cite{Sca}*{Prop. 4.4.5}
\end{proof}

\section{Computation of cohomological invariants}\label{sec:computations}
We start by recalling some facts on cohomological invariants of the stack of smooth hyperelliptic curves. A more in depth discussion of this subject can be found in the Introduction to \cite{DilPir2}.

Let ${\rm \'et}_n$ be the stack of \'etale algebras of degree $n$. It is isomorphic to the classifying stack ${\rm BS}_n$. Given a family of smooth hyperelliptic curves $C\to S$ with involution $\iota$, the ramification divisor of $C\to C/\langle \iota \rangle$ is the spectrum of an \'{e}tale $\Ocal_S$-algebra of degree $2g+2$. This defines a map $$\Hcal_g \longrightarrow {\rm \'et}_{2g+2} \simeq {\rm BS}_{2g+2}$$ which is smooth-Nisnevich, so that the pullback $\Inv({\rm BS}_{2g+2},\M) \to \Inv(\Hcal_g,\M)$ is injective. The cohomological invariants of ${\rm BS}_{2g+2}$ are isomorphic to
\[
\M^{\bullet}(k) \oplus \alpha_1\cdot \M^{\bullet}(k)_2 \oplus \ldots \oplus \alpha_{g+1}\cdot \M^{\bullet}(k)_2
\]
where the classes $\alpha_i$ are the \emph{arithmetic Stiefel-Whitney classes} coming from the invariants of ${\rm BO}_{2g+2}$, see \cite{GMS}*{23.3} for the case $\M=\H_D$, and \cite{DilPir2}*{4.9} for the general case.

\begin{thm}[\cite{DilPir2}*{Thm. 6.9, Rmk. 6.10}]\label{thm:DilPir2}
Let $\M$ be an $\ell$-torsion cycle module, and let $I_g$ be the submodule of $\Inv({\rm BS}_{2g+2},\M)$ given by 
\[
I_g=\M^{\bullet}(k) \oplus \bigoplus_{i=2}^{g+1} \alpha_i \cdot \M^{\bullet}(k)_2 .
\]
Let $n_g$ be equal to $4g+2$ if $g$ is even and $8g + 4$ if $g$ is odd. 
If $g$ is even, we have
\[
\Inv(\Hcal_g,\M) = \alpha'_1\cdot \M^{\bullet}(k)_{n_g} \oplus I_g \oplus \beta_{2g+2}\cdot \M^{\bullet}(k)_2
\]
where $\alpha'_1$ and $\beta_{2g+2}$ are elements of degree respectively $1$ and $2g+2$.

If $g$ is odd, there is an exact sequence
\[
0 \to {\alpha'_1}\cdot \M^{\bullet}(k)_{n_g} \oplus I_g \oplus w_2\cdot \M^{\bullet}(k)_2 \to \Inv(\Hcal_g,\M) \to \M^{\bullet}(k)_2
\]

where $\alpha'_1$ and $w_2$ are elements of degree respectively $1$ and $2$ and the last map lowers degree by $g+2$. When $k=\overline{k}$, the last map is surjective and the sequence splits.
\end{thm}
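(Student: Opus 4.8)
The plan is to reduce the statement to a computation of equivariant Chow groups with coefficients via the Arsie--Vistoli presentation of $\Hcal_g$, and then to carry out that computation by localizing along the discriminant together with an induction on $g$; this is the strategy of \cite{DilPir2}, which I would follow. First I would invoke the Arsie--Vistoli description $\Hcal_g\simeq[\bA(2g+2)_{{\rm sm}}/G_g]$, where $\bA(2g+2)$ is the affine space of binary forms of degree $2g+2$, the subscript denotes the open locus of forms with nonzero discriminant, and $G_g$ is ${\rm GL}_2$ when $g$ is even and an extension of $\PGL_2\times\Gm$ by $\Gm$ (equivalently $({\rm GL}_2\times\Gm)/\Gm$) when $g$ is odd, acting by change of variables twisted by a character. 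By the identification $\Inv([X/G],\M)\simeq A^0_G(X,\M)$ recalled in the introduction, it then suffices to compute $A^0_{G_g}(\bA(2g+2)_{{\rm sm}},\M)$.

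Next I would run the localization long exact sequence for the open immersion $\bA(2g+2)_{{\rm sm}}\hookrightarrow\bA(2g+2)$, whose complement is the discriminant hypersurface $\Delta$ of codimension $1$. By homotopy invariance $A^*_{G_g}(\bA(2g+2),\M)\simeq A^*_{G_g}(\Spec k,\M)$ is a free $\M^{\bullet}(k)$-module on the Chern classes of the relevant $G_g$-representations; its degree-zero part is $\M^{\bullet}(k)$ when $g$ is even, whereas for $g$ odd it carries an extra $\M^{\bullet}(k)_2$ summand detected in cohomological degree $2$ by the class $w_2$ pulled back from $\PGL_2$. This leaves the task of computing $\ker\!\big(A^0_{G_g}(\Delta,\M)\xrightarrow{\iota_*}A^1_{G_g}(\bA(2g+2),\M)\big)$, which by the exact sequence is precisely the cokernel of $A^0_{G_g}(\bA(2g+2),\M)\hookrightarrow\Inv(\Hcal_g,\M)$.

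For the discriminant term I would use that degree-zero Chow groups with coefficients are controlled by a dense open and work on the locus of forms with a single double root, which factor as $\ell^2 m$. Since ${\rm GL}_2$ acts transitively on $\bA^2\smallsetminus 0$ with stabilizer homotopy equivalent to $\Gm$, the relevant equivariant groups rewrite as a $\Gm$-equivariant computation over $\bA(2g)_{{\rm sm}}$, which is governed by the genus $g-1$ case, so the computation proceeds by induction on $g$. One then has to identify the generators that survive: the arithmetic Stiefel--Whitney classes $\alpha_i$ pulled back along the smooth-Nisnevich branch-divisor map $\Hcal_g\to{\rm \'et}_{2g+2}\simeq{\rm B}S_{2g+2}$ (so that $\Inv({\rm B}S_{2g+2},\M)$ embeds into $\Inv(\Hcal_g,\M)$), a degree-$1$ class $\alpha_1'$ whose torsion order grows from $2$ to $n_g$ once the $G_g$-action is taken into account, and a top-degree class $\beta_{2g+2}$ when $g$ is even, respectively the class $w_2$ when $g$ is odd whose coboundary into $\M^{\bullet}(k)_2$ (lowering degree by $g+2$) need not vanish over a general field but does over $\overline{k}$. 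Assembling the localization sequence with this inductive computation gives the direct-sum decomposition in the even case and the generally non-split exact sequence in the odd case.

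The hard part will be the inductive analysis of $A^*_{G_g}(\Delta,\M)$: controlling the Gysin pushforwards relating the strata of $\Delta$, and above all pinning down the exact torsion orders of the surviving generators together with the behaviour of the top-degree class. That last point is the source both of the even/odd dichotomy and of the failure of splitting over a non-closed field in odd genus, and a self-contained argument would essentially have to reproduce the bookkeeping of \cite{DilPir2}.
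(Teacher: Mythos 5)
This statement is not proved in the paper at all: it is imported verbatim from \cite{DilPir2}*{Thm.~6.9, Rmk.~6.10} and used as a black-box input, so there is no internal proof to compare your attempt against. Judged on its own terms, your proposal correctly reconstructs the architecture of the proof in the cited reference: the Arsie--Vistoli presentation $\Hcal_g\simeq[\bA(2g+2)_{\rm sm}/G_g]$ (with $G_g={\rm GL}_2$ acting through a twist for $g$ even and $\Gm\times\PGL_2$ for $g$ odd), the identification $\Inv(\Hcal_g,\M)\simeq A^0_{G_g}(\bA(2g+2)_{\rm sm},\M)$, the localization sequence along the discriminant, the appearance of $w_2$ from $\Inv({\rm B}\PGL_2,\M)$ only in the odd case (since ${\rm GL}_2$ is special), and the stratification/induction on the number of multiple roots. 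The torsion order $n_g$ you assign to $\alpha_1'$ is also consistent with $\Pic(\Hcal_g)\simeq\ZZ/2^{1+r_g}(2g+1)\ZZ$ as used later in the paper.

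The genuine gap is that everything which actually constitutes the theorem is deferred. Your outline does not carry out the inductive analysis of $A^0_{G_g}(\Delta,\M)$ (the Gysin maps between strata, and why the kernel of $\iota_*$ is exactly $\bigoplus_{i\ge 2}\alpha_i\cdot\M^{\bullet}(k)_2$ plus the extra torsion of $\alpha_1'$ and the single class $\beta$); it does not construct $\beta_{2g+2}$ or prove it is nonzero; it does not establish that the classes $\alpha_i$ pulled back from ${\rm B}S_{2g+2}$ generate freely over $\M^{\bullet}(k)_2$; and it does not explain why the sequence in the odd case fails to split over a general field while splitting over $\overline{k}$, which is the subtlest point. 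You acknowledge this yourself in the last paragraph. As a proof of the stated theorem the proposal is therefore incomplete; as a citation-level justification it is fine, and it is in fact how the present paper treats the result.
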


As the pullback of an open immersion is injective, the invariants of $\widetilde{\Hcal}_g$ inject into those of $\Hcal_g$. We want to understand which cohomological invariants extend to the stack $\widetilde{\Hcal}_g^o$ introduced in Section \ref{sec:admissible}, which in turn has the same invariants as $\widetilde{\Hcal}_g$ as they differ by a subset of codimension $2$.

\begin{prop}\label{prop:inv Hgtbd}
We have:
\begin{enumerate}
    \item for every $g\geq 2$ and for every $\ell$-torsion cycle module $\M$, the cohomological invariants of $\Hcal_g$ with coefficients in $\M$ that come from ${\rm B}S_{2g+2}$ extend to cohomological invariants of $\overline{\Hcal}^o_g\smallsetminus\Delta_0^o$.
    \item For $g\geq 3$ odd, the invariant $w_2$ pulled back along $\Hcal_g\to {\rm B}\PGL_2$ does not extend to an invariant of $\overline{\Hcal}^o_g\smallsetminus\Delta_0^o$.
\end{enumerate}
\end{prop}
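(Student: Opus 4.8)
The plan is to analyze the ramification of cohomological invariants of $\Hcal_g$ along the boundary divisors of $\overline{\Hcal}_g^o\smallsetminus\Delta_0^o$, using the description of these divisors provided by Proposition~\ref{prop:extended description}, and then to invoke the purity-type criterion for extension: an invariant $\alpha$ of $\Hcal_g$ extends to a smooth stack containing $\Hcal_g$ as a dense open with complement a union of divisors $D_j$ if and only if the ramification $\partial_{D_j}(\alpha)$ vanishes for each $j$ (equivalently, after passing to a smooth-Nisnevich cover, by the argument in the proof of Theorem~\ref{thm:root Nis}). For part (1), I would reduce the statement to checking that the arithmetic Stiefel--Whitney classes $\alpha_i$ (and the module $I_g$ they generate) have trivial ramification along each $\Delta_i^o$ for $i>0$ and each $\Xi_i^o$, i.e. along the divisors $\Dcal_i^o$ and $\Xcal_i^o$ in the model $\Fcal_g^o$. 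The key point is that these classes come from $\mathrm{B}S_{2g+2}$ via the map sending a hyperelliptic curve to its branch divisor, so their ramification is controlled by how the branch divisor degenerates as one approaches a boundary divisor: over $\Dcal_i^o$ and $\Xcal_i^o$ the $2g+2$ branch points specialize but remain a well-defined étale algebra of degree $2g+2$ (the branch locus does not acquire a node or ramification over those strata, since the stacky structure is only at the node of the conic and the section $s$ of $\Lcal^{\otimes(-2)}$ stays reduced there). Hence the classifying map $\Hcal_g\to\mathrm{B}S_{2g+2}$ extends over the generic point of each such divisor, and therefore so do the pulled-back invariants; by purity this gives an extension over all of $\overline{\Hcal}_g^o\smallsetminus\Delta_0^o$.

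For part (2), the strategy is to exhibit a single boundary divisor along which $w_2$ ramifies nontrivially. The class $w_2$ is pulled back from $\mathrm{B}\PGL_2$ along the map $\Hcal_g\to\mathrm{B}\PGL_2$ classifying the conic $C/\langle\iota\rangle$; concretely it is the obstruction to the $\PGL_2$-torsor being reducible, detected by a degree-two invariant. I would localize at the generic point of one of the divisors $\Xi_i^o$ (for $g$ odd, $i$ ranging so that $\Xi_i$ is nonempty, e.g. $i=1$), where the conic degenerates to a rank-$2$ conic, i.e. a pair of lines meeting at a point — equivalently, the $\PGL_2$-bundle degenerates to one whose structure group reduces to the normalizer of the diagonal torus, giving a nontrivial $\ZZ/2\ZZ$-class (the class of the "two components" double cover), which is exactly a nonzero residue. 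Explicitly, I would use the description of $\Xcal_i^o\subset\Fcal_g^o$ and compute $\partial_{\Xcal_i^o}(w_2)$ by a local model computation: near the generic point of $\Xcal_i^o$ the conic is $xy=t$ with $t$ a local parameter for the divisor, and the associated $\PGL_2$-torsor has $w_2$ equal to (a cup product involving) $(t)$ in Milnor $K$-theory mod $2$, whose residue along $t=0$ is the nonzero class in $\M^\bullet(k)_2$ coming from the degree-two part. This nonvanishing residue obstructs extension.

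The main obstacle I expect is part (2): pinning down precisely which boundary divisor witnesses the nontrivial ramification of $w_2$ and carrying out the local computation of the residue in a self-contained way. In particular one must be careful that $w_2$ is genuinely the $\PGL_2$-class and not something that, after the twisted-conic rigidification of Section~\ref{sec:stable}, becomes divisible or trivial on the relevant stratum; the half-integer bidegree conditions on $\Lcal$ in the definition of $\Fcal_g^o$ interact with the arithmetic of $\PGL_2$ versus $\mathrm{PO}_3$ and one needs the parity condition (which is where $g$ odd enters) to see the residue is nonzero. I would also need to confirm that for $g$ even the analogous residue vanishes or is absorbed, consistent with Theorem~\ref{thm:DilPir2}, but that is not needed for this Proposition. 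A cleaner alternative for part (2), which I would pursue if the direct residue computation gets unwieldy, is to use Theorem~\ref{thm:root stack}/Theorem~\ref{thm:root Nis} together with Proposition~\ref{prop:is a root}: since $\widetilde{\Hcal}_g^o$ is the order-$2$ root stack of $\overline{\Hcal}_g^o$ along $\Delta_0^o$ and $w_2$ is already $2$-torsion, if $w_2$ extended to $\overline{\Hcal}_g^o\smallsetminus\Delta_0^o$ its ramification along $\Delta_0^o$ would automatically be $2$-torsion and hence it would extend to all of $\widetilde{\Hcal}_g^o$; one then derives a contradiction by comparing with the known structure of $\Inv(\Hcal_g,\M)$ in the odd case from Theorem~\ref{thm:DilPir2}, where $w_2$ sits in the submodule that does \emph{not} survive to the quotient detected by the degree-lowering map.
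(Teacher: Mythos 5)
For part (1) your argument is essentially the paper's, just phrased less directly: the paper simply observes that on $\Fcal_g^o\smallsetminus\Dcal_0^o$ the vanishing locus of $s$ is everywhere \'etale of degree $2g+2$, so the classifying map $\Hcal_g\to {\rm B}S_{2g+2}$ itself extends to a morphism $\overline{\Hcal}_g^o\smallsetminus\Delta_0^o\to {\rm B}S_{2g+2}$, and the invariants coming from ${\rm B}S_{2g+2}$ extend by pullback along this extension. Your residue-plus-purity detour checks the same thing generically and then invokes the $A^0$ exact sequence; it works, but no residue computation is actually needed.

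Part (2) is where the gap is. Note first that the paper does not prove (2) here: it cites \cite{DilPir}*{Thm. A.1}. Your primary argument is a residue computation that is asserted rather than carried out, and the most natural version of it gives zero rather than the nonzero class you claim. Writing a degenerating conic locally as $\langle a,b,tc\rangle$ with $t$ a uniformizer for the boundary divisor, one finds $\partial_t(w_2)=(ab)$, which up to a class $(-1)$ is the discriminant of the pair of lines into which the conic breaks, i.e.\ exactly your ``two components double cover'' class. But over the generic point of each $\Xi_i^o$ and each $\Delta_i^o$ with $i>0$ --- the only divisors available once $\Delta_0^o$ is removed --- the two components of the degenerate conic carry different degrees of $\Lcal$ (for $g$ odd they are never interchangeable), hence are Galois-stable and individually rational, so that class is trivial and the residue is at most $(-1)$, which vanishes over, say, an algebraically closed base field, where the statement is still needed. (Over $\Delta_i^o$ the node is moreover twisted, which further perturbs the local model in a direction that tends to kill $2$-torsion residues.) So your computation as sketched does not locate the obstruction, and the actual argument of \cite{DilPir}*{Thm. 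A.1} must be subtler. Your fallback argument is circular: Theorem \ref{thm:DilPir2} computes $\Inv(\Hcal_g,\M)$ of the \emph{open} stack and contains no information about which classes extend to the boundary; $w_2\cdot\M^{\bullet}(k)_2$ sits inside the submodule that injects into $\Inv(\Hcal_g,\M)$, not in a quotient it ``fails to survive to,'' so no contradiction can be extracted from that theorem alone. To repair part (2) you need to either reproduce the argument of \cite{DilPir}*{Thm. A.1} or identify the genuine nontrivial residue, which is not the naive discriminant class.
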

\begin{proof}
To prove (1), we show that the map $\Hcal_g\to {\rm B}S_{2g+2}$ extends to the whole of $\overline{\Hcal}^o_g\smallsetminus\Delta_0^o$.
Because of the isomorphism $\overline{\Hcal_g}^o \simeq \Fcal_g^o$ of Proposition \ref{prop:extended description}, we can equivalently prove that there is a morphism $\Fcal_g^o\smallsetminus\Dcal_0^o\to {\rm B}S_{2g+2}$ extending the morphism on $\Hcal_g$.

This last task is readily accomplished: to every object $(P\to S,L,s)$ of $\Fcal_g^o\smallsetminus\Dcal_0^o$ we associate the vanishing locus of $s$, which by construction is \'{e}tale over $S$ of degree $2g+2$, determining in this way the desired map $\overline{\Hcal}^o_g\smallsetminus\Delta_0^o\to {\rm B}S_{2g+2}$.

Point (2) has already been shown in the proof of \cite{DilPir}*{Thm. A.1}.
\end{proof}

\begin{prop}\label{prop:inv Hgtbd trivial case}
    Let $\M$ be a $p^n$-primary torsion cycle module, with $2, {\rm char}(k)\nmid p$. Then
    \[\Inv(\overline{\Hcal}_g^o\smallsetminus\Delta_0^o,\M)\simeq\M^{\bullet}(k). \]
\end{prop}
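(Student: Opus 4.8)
The plan is to bound $\Inv(\overline{\Hcal}_g^o\smallsetminus\Delta_0^o,\M)$ above by the invariants of $\Hcal_g$ and then check which of those survive over the boundary divisors of $\overline{\Hcal}_g^o$ other than $\Delta_0^o$. Since $\Hcal_g$ is a dense open substack of the smooth stack $\overline{\Hcal}_g^o\smallsetminus\Delta_0^o$, restriction gives an injection $\Inv(\overline{\Hcal}_g^o\smallsetminus\Delta_0^o,\M)\hookrightarrow\Inv(\Hcal_g,\M)$ whose image is a submodule containing the constant invariants $\M^{\bullet}(k)$; removing the divisors $\Delta_i^o$ ($1\le i\le\lfloor g/2\rfloor$) and $\Xi_j^o$ ($1\le j\le\lfloor g/2\rfloor-1$) one at a time and using the localization sequence for Chow groups with coefficients at each stage, this image is exactly the submodule of $\Inv(\Hcal_g,\M)$ of invariants whose ramification along each of these divisors — computed on a smooth-Nisnevich cover, as in Theorem \ref{thm:root Nis} — vanishes.

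Next I compute $\Inv(\Hcal_g,\M)$ in the present case. As $p$ is odd, multiplication by $2$ is invertible on $\M^{\bullet}(k)$, so $\M^{\bullet}(k)_2=0$. Feeding this into Theorem \ref{thm:DilPir2}: the submodule $I_g$ collapses to $\M^{\bullet}(k)$, the summands $\beta_{2g+2}\cdot\M^{\bullet}(k)_2$ and $w_2\cdot\M^{\bullet}(k)_2$ disappear, and for odd $g$ the target $\M^{\bullet}(k)_2$ of the last map is zero, so its exact sequence degenerates to an isomorphism. In either parity this gives $\Inv(\Hcal_g,\M)=\M^{\bullet}(k)\oplus\alpha'_1\cdot\M^{\bullet}(k)_{n_g}$, and since $n_g\in\{4g+2,8g+4\}$ and $p\ne2$ we have $\M^{\bullet}(k)_{n_g}=\M^{\bullet}(k)_{2g+1}$. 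Hence if $p\nmid 2g+1$ the second summand is zero, $\Inv(\Hcal_g,\M)=\M^{\bullet}(k)$, and the reduction step gives the claim at once.

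It remains to treat the case $p\mid 2g+1$, where one must rule out that any nonzero element of $\alpha'_1\cdot\M^{\bullet}(k)_{n_g}$ extends over $\overline{\Hcal}_g^o\smallsetminus\Delta_0^o$. By the reduction step it suffices to exhibit a boundary divisor $D$ among the $\Delta_i^o$ and $\Xi_j^o$ for which $\partial_D(\alpha'_1)$ is prime to $p$: since $\partial_D(\alpha'_1\cdot x)=\partial_D(\alpha'_1)\cdot x$ for $x$ constant, multiplication by $\partial_D(\alpha'_1)$ is then injective on the $p$-group $\M^{\bullet}(k)_{n_g}$, so $\alpha'_1\cdot x$ ramifies along $D$ for every $x\ne0$. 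To locate such a $D$ one uses the description of $\alpha'_1$ coming from the proof of Theorem \ref{thm:DilPir2}: in the quotient-stack presentation of $\Hcal_g$, $\alpha'_1$ is the degree-$1$ invariant attached to the torsion line bundle on $\Hcal_g$, whose order $n_g$ is forced by the linear relation expressing the discriminant divisor of $\overline{\Hcal}_g^o$ (equivalently $\Delta_0^o$) in $\Pic(\overline{\Hcal}_g^o)$ in terms of the Hodge class and the boundary divisors $\Delta_0^o,\Delta_i^o,\Xi_j^o$. Combining this relation with the descriptions $\Delta_i^o\simeq\Dcal_i^o$ and $\Xi_j^o\simeq\Xcal_j^o$ of Section \ref{sec:stable} and their normal bundles identifies $\partial_{\Delta_i^o}(\alpha'_1)$ and $\partial_{\Xi_j^o}(\alpha'_1)$ with the corresponding coefficients in that relation, and one checks these are not all divisible by $p$.

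The crux is this last step. One has to pin down $\alpha'_1$ explicitly enough, and control the normal geometry of the reducible boundary divisors inside $\overline{\Hcal}_g^o$ precisely enough, to be certain that $\alpha'_1$ genuinely ramifies away from $\Delta_0^o$: a priori the torsion of the line bundle underlying $\alpha'_1$ could be accounted for by $\Delta_0^o$ alone, in which case $\alpha'_1$ would extend and the statement would be false. So the real content is that the Cornalba–Harris-type relation on $\overline{\Hcal}_g^o$ mixes $\Delta_0^o$ with the $\Delta_i^o$ and $\Xi_j^o$ in a way that remains nontrivial modulo $p$.
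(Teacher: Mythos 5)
Your reduction to $\Hcal_g$ and the computation of $\Inv(\Hcal_g,\M)$ for odd $p$ agree with the paper, and you have correctly located where the content lies: everything comes down to whether the degree-one torsion invariant (your $\alpha'_1$; the paper works with the generator $[\Lcal]$ of $\Pic(\Hcal_g)_{p^n}$) ramifies along the boundary divisors other than $\Delta_0^o$. But as written there is a genuine gap: the decisive claim, that some $\partial_D(\alpha'_1)$ is prime to $p$ for $D$ among the $\Delta_i^o$, $\Xi_j^o$, is left as ``one checks,'' and you yourself observe that if this check failed the proposition would be false. In the case $p\mid 2g+1$ that check \emph{is} the proposition, so the argument is not complete. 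The route you propose for closing it --- extracting the individual residues from the Cornalba--Harris relation together with the normal-bundle geometry of $\Dcal_i^o$ and $\Xcal_j^o$ --- is also harder than necessary: you would need to argue, uniformly in $g$ and in every prime $p\mid 2g+1$, that the relevant coefficients are not all congruent to $0$ modulo $p$, and you would first have to justify precisely that the residues of $\alpha'_1$ are given by those coefficients.

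The paper closes this gap without computing any residue. Using ${\rm Inv}^1(\Xcal,\K_{p^n})={\rm H}^1(\Xcal,\mu_{p^n})$ and the Kummer sequence, the nonconstant degree-one invariants of $\overline{\Hcal}_g^o\smallsetminus\Delta_0^o$ with $\K_{p^n}$-coefficients are controlled by the $p^n$-torsion of $\Pic(\overline{\Hcal}_g^o\smallsetminus\Delta_0^o)$, which vanishes because $\Pic(\overline{\Hcal}_g)$ is torsion free (\cite{Cor}*{Thm. 2} in characteristic $0$, \cite{DilPir}*{Thm. B.1} in positive characteristic). This already settles the case $\M=\K_{p^n}$, and it forces the ramification of $[\Lcal]$ along $\cup\Delta_i^o\cup\Xi_j^o$ to be invertible in $\ZZ/p^m\ZZ$: if every residue were divisible by $p$, a suitable combination of a lift of $[\Lcal]$ with boundary classes would produce a nonzero $p$-torsion element of $\Pic(\overline{\Hcal}_g^o\smallsetminus\Delta_0^o)$. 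The general case then follows from the module structure, exactly as in your last step: $\partial_D([\Lcal]\cdot\gamma)=\partial_D([\Lcal])\cdot\gamma\neq 0$ for every $\gamma\neq 0$ in the $p^m$-torsion of $\M^{\bullet}(k)$. You should either substitute this Picard-group argument for your unfinished coefficient computation, or actually carry that computation out in full.
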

\begin{proof}
    We know from \cite{DilPir2}*{Prop. 7.5} that
    \begin{equation}\label{eq:ptorsion}
    \Inv(\Hcal_g,\M)\simeq \M^{\bullet}(k)\oplus\M^{\bullet}(k)_{p^m}[1] 
    \end{equation}
    as $\M^{\bullet}(k)$-modules, where $p^m={\rm gcd}(p^n,2g+1)$. 
    
    Let $[\Lcal]$ be the generator of the $p^n$-torsion subgroup of \[\Pic(\Hcal_g)\simeq \ZZ/2^{1+r_g}(2g+1)\ZZ,\] 
    where $r_g\in\{0,1\}$ is such that $r_g\equiv g$ mod $2$. Recall \cite{DilPir2}*{Lm. 1.13} that ${\rm Inv}^1(\Hcal_g,\K_\ell) = {\rm H}^1_{\rm ét}(\Hcal_g, \mu_\ell)$. Then in (\ref{eq:ptorsion}) above we have
    \[ \M^{\bullet}(k)_{p^m}[1]\simeq \M^{\bullet}(k)_{p^{m}}\cdot [\Lcal], \]
    with the product induced by the $\K_{\ell}$-module structure of $\M$.
    
    First suppose that $\M=\K_{p^n}$. In order to prove that $\overline{\Hcal}_g^o\smallsetminus\Delta_0^o$ has only trivial invariants, it is enough to check that its Picard group has trivial $p^n$-torsion: this easily follows from the computation of $\Pic(\overline{\Hcal}_g)$ (see \cite{Cor}*{Thm. 2} in characteristic $0$ and \cite{DilPir}*{Thm. B.1} in positive characteristic). Moreover, this also shows that the ramification of $[\Lcal]$ along the boundary divisor $\cup \Delta_i^o \cup \Xi_j^o$, regarded as an element of $\ZZ/p^m\ZZ$, is invertible.
    
    For a general $p^n$-primary torsion cycle module $\M$ and $\gamma\in \M^{\bullet}_{p^m}(k)$, let $[\Lcal]\cdot \gamma$ be an invariant of $\Hcal_g$ that extends to an invariant of $\overline{\Hcal}_g^o\smallsetminus\Delta_0^o$: its ramification is equal to $\gamma$ times the ramification of $[\Lcal]$, which we know to be invertible as an element of $\ZZ/p^m\ZZ$. This implies $\gamma=0$, and we are done.
\end{proof}
\begin{cor}
    Let $\M$ be a $p^n$-primary torsion Galois module, with ${\rm char}(k), 2\nmid p$. Then
    \[ \Inv(\widetilde{\Hcal}_g,\M)\simeq\M^{\bullet}(k) \]
\end{cor}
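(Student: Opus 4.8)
The plan is to deduce the statement directly from the root stack formula together with the triviality result of Proposition~\ref{prop:inv Hgtbd trivial case}. First I would note that $\widetilde{\Hcal}_g$ is a smooth Deligne--Mumford stack and $\widetilde{\Hcal}_g^o$ is an open substack whose complement has codimension $\geq 2$, so that cohomological invariants do not see this difference and $\Inv(\widetilde{\Hcal}_g,\M)\simeq\Inv(\widetilde{\Hcal}_g^o,\M)$. Here $\M$ denotes both the given $p^n$-primary torsion Galois module and the associated cycle module $\H_\M$, to which all the earlier results apply. By Proposition~\ref{prop:is a root}, $\widetilde{\Hcal}_g^o$ is the root stack of $\overline{\Hcal}_g^o$ of order $2$ along $\Delta_0^o$, and since ${\rm char}(k)=c\neq 2$ the hypotheses of Theorem~\ref{thm:root Nis} are satisfied.

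Next I would apply Theorem~\ref{thm:root Nis} to identify $\Inv(\widetilde{\Hcal}_g^o,\M)$ with the subgroup of $\Inv(\overline{\Hcal}_g^o\smallsetminus\Delta_0^o,\M)$ consisting of the invariants whose ramification along $\Delta_0^o$, computed on any smooth-Nisnevich cover of $\overline{\Hcal}_g^o$ by a scheme, is of $2$-torsion. By Proposition~\ref{prop:inv Hgtbd trivial case} we have $\Inv(\overline{\Hcal}_g^o\smallsetminus\Delta_0^o,\M)\simeq\M^\bullet(k)$, so every such invariant is constant; a constant invariant pulls back to a constant invariant on any scheme cover $X$, hence extends across all of $X$ and has vanishing ramification at the preimage of $\Delta_0^o$. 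In particular that ramification is ($2$-)torsion, so the whole of $\M^\bullet(k)$ satisfies the condition of Theorem~\ref{thm:root Nis}, giving $\Inv(\widetilde{\Hcal}_g,\M)\simeq\Inv(\widetilde{\Hcal}_g^o,\M)\simeq\M^\bullet(k)$.

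I do not expect any real obstacle: the argument is a formal combination of Proposition~\ref{prop:is a root}, Theorem~\ref{thm:root Nis} and Proposition~\ref{prop:inv Hgtbd trivial case}, the only point deserving a word being that constant invariants have vanishing, hence $2$-torsion, ramification. The genuine content has already been absorbed into the cited results; in particular the crux is the triviality of the $p^n$-torsion in the relevant Picard group that underlies Proposition~\ref{prop:inv Hgtbd trivial case}.
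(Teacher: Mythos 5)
Your argument is correct, but it takes a more roundabout route than the paper. The paper's own proof is a one-liner: the locus $\overline{\Hcal}_g^o\smallsetminus\Delta_0^o$ is a dense open substack of $\widetilde{\Hcal}_g$ (the root stack is an isomorphism away from the divisor, and the remaining complement has codimension $2$), so restriction gives an injection $\Inv(\widetilde{\Hcal}_g,\M)\hookrightarrow\Inv(\overline{\Hcal}_g^o\smallsetminus\Delta_0^o,\M)\simeq\M^{\bullet}(k)$ by Proposition \ref{prop:inv Hgtbd trivial case}; since the target is already the minimal possible group of invariants (the constants, which every stack carries), equality is immediate. You instead invoke Proposition \ref{prop:is a root} and the full root-stack formula of Theorem \ref{thm:root Nis} to identify $\Inv(\widetilde{\Hcal}_g^o,\M)$ exactly as the subgroup of invariants with $2$-torsion ramification along $\Delta_0^o$, and then observe that constants are everywhere unramified. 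Both arguments rest on the same essential input (the triviality of the odd-primary invariants of $\overline{\Hcal}_g^o\smallsetminus\Delta_0^o$, which in turn comes from the $p^n$-torsion-freeness of the relevant Picard group), and your extra step is sound; it just deploys machinery that the injectivity of restriction to a dense open renders unnecessary here, though it does have the small virtue of exhibiting the isomorphism as an instance of the general root-stack computation rather than as a squeeze between an upper and a lower bound.
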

\begin{proof}
    The stack $\overline{\Hcal}_g^o\smallsetminus\Delta_0^o$ is an open substack of $\widetilde{\Hcal}_g$, hence the invariants of the latter inject into the the ring of invariants of the former, which is trivial by Proposition \ref{prop:inv Hgtbd trivial case}.
\end{proof}

\subsection{The $g$ even case}

\begin{prop}\label{prop:exceptional}
    Let $g\geq 2$ be even, and let $\M$ be a $2^n$-primary torsion Galois module. Then the exceptional invariant $\beta_{g+2}$ of $\Hcal_g$ extends to $\overline{\Hcal}^o_g\smallsetminus\Delta_0^o$.
\end{prop}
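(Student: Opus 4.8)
The plan is to reduce the statement to a vanishing of residues along the boundary of $\overline{\Hcal}^o_g\smallsetminus\Delta_0^o$ and then to check that vanishing in the model $\Fcal_g^o$ of Proposition \ref{prop:extended description}. Since $\Hcal_g$ is dense and open in $\overline{\Hcal}^o_g\smallsetminus\Delta_0^o$ and a cohomological invariant vanishes as soon as it vanishes on a dense open, $\beta_{g+2}$ extends across the boundary if and only if $\partial_{D}(\beta_{g+2})=0$ for every irreducible boundary component $D$; and since the components in play --- the divisors $\Delta_i^o$ for $i\geq 1$ and $\Xi_j^o$ --- meet each other only in codimension $2$, one can extend the invariant across each of them in turn and glue the extensions, exactly as in the proof of Theorem \ref{thm:root Nis}. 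So everything reduces to showing $\partial_{\Delta_i^o}(\beta_{g+2})=0$ for $i\geq 1$ and $\partial_{\Xi_j^o}(\beta_{g+2})=0$ for all $j$.

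To compute these residues I would work over the generic point of each boundary divisor through the isomorphism $\overline{\Hcal}^o_g\simeq\Fcal_g^o$: there the universal object is a rank-$2$ conic $P$ --- twisted at the node on the divisors $\Dcal_i^o$, untwisted on $\Xcal_j^o$ --- carrying a line bundle $\Lcal$ of the bidegree recorded in Section \ref{sec:stable} together with a section $s$ of $\Lcal^{\otimes(-2)}$. Writing down the associated equivariant local presentation and feeding the explicit expression for $\beta_{g+2}$ from \cite{DilPir2} into Rost's residue formulas \cite{Rost}*{Sec. 1} should give the vanishing. Conceptually, what makes this work is that $\beta_{g+2}$ is manufactured from the universal family: modulo the invariants pulled back from ${\rm B}S_{2g+2}$, which already extend over $\overline{\Hcal}^o_g\smallsetminus\Delta_0^o$ by the first part of Proposition \ref{prop:inv Hgtbd} because off $\Dcal_0^o$ the zero locus of $s$ is still finite étale of degree $2g+2$, the class $\beta_{g+2}$ is (roughly) the product of the top generator $\alpha_{g+1}$ of that subring with a tautological degree-one class attached to $(\Pcal,\Lcal,s)$, and such a class is defined on the whole of $\Fcal_g^o$; so the product extends and its boundary residues vanish for free. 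The half-integer degrees occurring on the $\Dcal_i^o$ are dealt with by passing to the auxiliary rank-$2$ conic $P'$ of Section \ref{sec:stable}, which turns the computation into an honest ${\rm GL}_2$-equivariant one.

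The main obstacle is precisely the \emph{exceptional} nature of $\beta_{g+2}$: in contrast with the classes of $I_g$ it is not pulled back from ${\rm B}S_{2g+2}$, so one cannot simply extend a classifying map as in Proposition \ref{prop:inv Hgtbd}; one must instead pin down the tautological factor of $\beta_{g+2}$ explicitly and control its ramification along $\Delta_i^o$ and $\Xi_j^o$ by hand. A secondary technical point, present only in the $\Delta_i^o$ computations, is the bookkeeping of the half-integer degrees coming from the twisted nodes, for which the auxiliary-conic device of Section \ref{sec:stable} is the appropriate tool.
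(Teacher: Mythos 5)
Your proposal follows essentially the same route as the paper: both reduce to the model $\Fcal_g^o$ of Proposition \ref{prop:extended description}, write $\beta_{g+2}$ as the product of $\alpha_{g+1}$ (which extends by Proposition \ref{prop:inv Hgtbd}) with the tautological class $t=[\Lcal]$ attached to the universal pair $(\Lcal,\sbold)$, and observe that all of this data persists over the boundary, so the construction carries over verbatim. The one substantive point your sketch underplays is the descent step: the paper builds the extension on the smooth locus $\Pcal_{sm}$ of the universal conic and then checks the gluing condition $\pr_1^*\xi=\pr_2^*\xi$ on the dense open $\Ccal\times_{\Hcal_g}\Ccal$, whereas the residues along $\Delta_i^o$ and $\Xi_j^o$ that you propose to compute ``by hand'' in fact vanish automatically once both factors of $\alpha_{g+1}\cdot t$ are known to be globally defined away from $\{\sbold=0\}$ --- the genuinely delicate ramification is the one along $\{\sbold=0\}$, which is killed by the vanishing of $\alpha_{g+1}$ restricted there, exactly as in the smooth case.
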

\begin{proof}
We give the proof for $\M=\K_2$, the general case follows easily from this one.

Let us recall how $\beta_{g+2}$ is constructed. Let $\Ccal\to\Hcal_g$ be the universal smooth conic, and let $\Lcal$ and $\sbold$ be respectively the universal line bundle on $\Ccal$ and the universal global section $\sbold$ of $\Lcal^{\otimes (-2)}$. The existence of such objects over $\Hcal_g$ is a consequence of \cite{ArsVis}*{Prop. 2.2}.

Let $\Ucal$ be the complement in $\Ccal$ of the vanishing locus $\Vcal$ of $\sbold$: in particular, we have that $\Lcal^{\otimes 2}$ is trivial on $\Ucal$ or equivalently $\Lcal_{\Ucal}$ belongs to ${\rm H}^1(\Ucal,\mu_2)$.

Then we can define a degree $1$ cohomological invariant $t$ of $\Ucal$ as follows: given a morphism $p:\Spec(K)\to\Ucal$ we set $t(p)=[\Lcal_K]$, which belongs to ${\rm H}^1(\Spec(K),\mu_2)$.

Another way to see the invariant $t$ is the following. Given a morphism $p:\Spec(K)\to\Ucal$, consider any lifting $p':\Spec(K) \to \Lcal^{\otimes (-2)}$. The element $p'^*(\sbold) \in K^*$ is well defined up to squares, and it's easy to see that this glues to the cohomological invariant $t$. In particular, this shows that the boundary of $t$ in the complement of $\Ucal$ is equal to $1$.

This invariant does not extend to a global invariant of $\Ccal$, but $\alpha_{g+1}\cdot t$ does, where $\alpha_{g+1}$ is the degree $g+1$ generator of $\Inv({\rm B}S_{2g+2},\M)$. The reason is that 
\[ \partial_{\mathcal{V}}(t \cdot \alpha_{g+1})=\alpha_{g+1} \]
and the restriction of $\alpha_g$ to $\mathcal{V}$ is zero. This is explained in detail in \cite{DilPir}*{Sec. 2}.

Using the well known property that $\Ccal$ is actually the projectivization of a rank $2$ vector bundle, we deduce that $\Inv(\Ccal)\simeq\Inv(\Hcal_g)$, hence the construction above gives the exceptional invariant $\beta_{g+2}$ of $\Hcal_g$.

The construction of the exceptional invariant for $\overline{\Hcal}_g^o\smallsetminus\Delta_0^o$ proceeds along the same lines.
Let $\Pcal_{sm}\to \overline{\Hcal}_g^o\smallsetminus\Delta_0^o$ be the smooth locus of the universal conic, and let $\Lcal$ and $\sbold$ be respectively the universal line bundle restricted to $\Pcal_{sm}$ and the universal global section of $\Lcal^{\otimes (-2)}$. Again, the existence of such objects is assured by the isomorphism $\overline{\Hcal}_g^o\smallsetminus\Delta_0^o\simeq\Fcal_g^o$ of Proposition \ref{prop:extended description}.

Observe that $\Pcal_{sm}\to \overline{\Hcal}_g^o\smallsetminus\Delta_0^o$ is a smooth-Nisnevich covering of $\overline{\Hcal}_g^o\smallsetminus\Delta_0^o$, hence the cohomological invariants of the latter inject into $\Inv(\Pcal^{sm}, \M)$ via pullback.

Moreover, if an invariant of $\Ccal$, the universal smooth conic, can be extended to $\Pcal_{sm}$, then it necessarily comes from $\overline{\Hcal}_g^o\smallsetminus\Delta_0^o$. Indeed, let $\xi$ be such an invariant: due to  $\Pcal_{sm}\to\overline{\Hcal}_g^o\smallsetminus\Delta_0^o$ being a smooth-Nisnevich cover, we have that $\xi$ comes from $\overline{\Hcal}_g^o\smallsetminus\Delta_0^o$ if and only if it glues, i.e. $\pr_1^*\xi-\pr_2^*\xi=0$ in $\Inv(\Pcal_{sm}\times_{\overline{\Hcal}_g^o\smallsetminus\Delta_0^o}\Pcal_{sm}, \M)$.

On the other hand the restriction of $\pr_1^*\xi-\pr_2^*\xi$ to $\Ccal\times_{\Hcal_g}\Ccal$ is zero, as we already know that the invariant glues over $\Hcal_g$. As $\Ccal\times_{\Hcal_g}\Ccal$ is an open substack of $\Pcal_{sm}\times_{\overline{\Hcal}_g^o\smallsetminus\Delta_0^o}\Pcal_{sm}$, the invariants of the latter injects into the invariants of the former. Therefore $\pr_1^*\xi-\pr_2^*\xi$ is zero as well, hence $\xi$ glues to an invariant of $\overline{\Hcal}_g^o\smallsetminus\Delta_0^o$, as claimed.
%{\todo ho cambiato la dimostrazione perché non possiamo parlare di invarianti per $\Pcal$, o meglio possimo ma non possiamo maneggiarli come se fossero elementi di $A^0$}

The same argument used to produce the invariant $\beta_{g+2}$ of $\Ccal$ applies in this situation, giving us an invariant on $\Pcal_{sm}$ which obviously extends $\beta_{g+2}$. More precisely, one considers the complement of the vanishing locus of $\sbold$ in $\Pcal_{sm}$: here we have the invariant $t$ of cohomological degree $1$ given by
\[ t= [\Lcal]\in {\rm H}^1(\Pcal_{sm}\smallsetminus\{\sbold=0\},\mu_2). \]
The product $\alpha_{g+1}\cdot t$, due to the very same reason for which $\alpha_{g+1}\cdot t$ extended in the smooth case, extends to a global invariant of $\Pcal_{sm}$. With a little abuse of notation, this invariant is also denoted $\beta_{g+2}$. Then, for what we observed a few lines above, the invariant $\beta_{g+2}$ must come from $\overline{\Hcal}_g^o\smallsetminus\Delta_0^o$. This concludes the proof.
\end{proof}
Putting together Proposition \ref{prop:inv Hgtbd} and Proposition \ref{prop:exceptional}, we derive the following result.
\begin{cor}\label{cor:inv Hgtbd}
    Let $g\geq 2$ be an even integer and $\M$ a $2$-primary torsion cycle module. 
    Then we have
           \[ \Inv(\Hcal_g,\M)\simeq \Inv(\overline{\Hcal}_g^o\smallsetminus\Delta_0^o,\M) \]
\end{cor}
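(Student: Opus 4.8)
The plan is to combine the explicit description of $\Inv(\Hcal_g,\M)$ from Theorem \ref{thm:DilPir2} with the extension statements of Propositions \ref{prop:inv Hgtbd} and \ref{prop:exceptional}. Since $\Hcal_g$ is an open dense substack of $\overline{\Hcal}_g^o\smallsetminus\Delta_0^o$ (it is the locus of smooth curves, disjoint from every boundary divisor, and it is dense as $\overline{\Hcal}_g$ is its closure), restriction yields an injection $\Inv(\overline{\Hcal}_g^o\smallsetminus\Delta_0^o,\M)\hookrightarrow\Inv(\Hcal_g,\M)$. Hence the entire content of the statement is the surjectivity of this map: every cohomological invariant of $\Hcal_g$ must be shown to extend across the boundary divisors $\Delta_i^o$ and $\Xi_j^o$.

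First I would unwind Theorem \ref{thm:DilPir2} in the even case under the $2$-primary hypothesis. There $\Inv(\Hcal_g,\M)=\alpha'_1\cdot\M^{\bullet}(k)_{n_g}\oplus I_g\oplus\beta_{g+2}\cdot\M^{\bullet}(k)_2$, with $n_g=4g+2=2(2g+1)$ and $I_g=\M^{\bullet}(k)\oplus\bigoplus_{i=2}^{g+1}\alpha_i\cdot\M^{\bullet}(k)_2$. Because $\M$ is $2$-primary and $2g+1$ is odd, $\M^{\bullet}(k)_{n_g}=\M^{\bullet}(k)_2$; and since $g$ is even, so $r_g=0$ in the notation of Proposition \ref{prop:inv Hgtbd trivial case}, the pullback of the first arithmetic Stiefel--Whitney class $\alpha_1$ along $\Hcal_g\to{\rm B}S_{2g+2}$ generates the same submodule as $\alpha'_1$ rather than a proper multiple of it. Consequently $\Inv(\Hcal_g,\M)$ is generated as an $\M^{\bullet}(k)$-module by $1$, by the pullbacks of the arithmetic Stiefel--Whitney classes $\alpha_1,\dots,\alpha_{g+1}$ of ${\rm B}S_{2g+2}$, and by the exceptional invariant $\beta_{g+2}$.

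Then I would check that each of these finitely many generators extends to $\overline{\Hcal}_g^o\smallsetminus\Delta_0^o$. The classes pulled back from ${\rm B}S_{2g+2}$ extend by Proposition \ref{prop:inv Hgtbd}(1): the classifying morphism $\Hcal_g\to{\rm B}S_{2g+2}$ extends over $\overline{\Hcal}_g^o\smallsetminus\Delta_0^o$, equivalently the vanishing locus of the universal section $s$ on $\Fcal_g^o\smallsetminus\Dcal_0^o$ is étale of degree $2g+2$. The exceptional invariant $\beta_{g+2}$ extends by Proposition \ref{prop:exceptional}. Since all generators extend, the restriction map is surjective, and being injective it is an isomorphism.

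I expect the only real subtlety to be the bookkeeping around the degree-one term: one must verify that the submodule $\alpha'_1\cdot\M^{\bullet}(k)_{n_g}$, which a priori could be strictly larger than the image of $\alpha_1\cdot\M^{\bullet}(k)_2$ coming from ${\rm B}S_{2g+2}$, is in fact entirely accounted for by the extendable invariants. This is precisely where both halves of the $2$-primary hypothesis enter — killing the odd factor $2g+1$ of $n_g$, and, via $r_g=0$ for even $g$, identifying the pullback of $\alpha_1$ with the generator $\alpha'_1$. Everything else is a direct application of the quoted results, with no further computation needed.
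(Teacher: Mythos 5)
Your proposal is correct and follows the same route as the paper, whose proof of this corollary is literally the one-line observation that Propositions \ref{prop:inv Hgtbd} and \ref{prop:exceptional} together show every generator of $\Inv(\Hcal_g,\M)$ extends, while restriction to the dense open $\Hcal_g$ is injective. Your extra bookkeeping in degree one — that for $2$-primary $\M$ and even $g$ one has $\M^{\bullet}(k)_{n_g}=\M^{\bullet}(k)_2$ and $\alpha'_1\cdot\M^{\bullet}(k)_2=\alpha_1\cdot\M^{\bullet}(k)_2$ since $\alpha_1=(2g+1)\alpha'_1$ with $2g+1$ odd — is exactly the point the paper leaves implicit, and you have it right.
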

We can now apply the formula for the invariants of a root stack to compute the invariants of $\widetilde{\Hcal}_g$.
\begin{thm}\label{thm:inv even}
    Let $g\geq 2$ be even and $\M$ a cycle module of $p^n$-primary torsion, with $p\neq c$ prime. Then if $p=2$ we have
    \[ \Inv(\widetilde{\Hcal}_g,\M)\simeq \Inv(\Hcal_g,\M), \]
    otherwise the invariants of $\widetilde{\Hcal}_g$ are trivial.
\end{thm}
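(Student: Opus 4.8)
The plan is to reduce the statement entirely to results already established in the paper, splitting according to whether $p=2$.

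When $p\neq 2$ there is nothing new to prove: the Corollary following Proposition~\ref{prop:inv Hgtbd trivial case} already gives $\Inv(\widetilde{\Hcal}_g,\M)\simeq\M^{\bullet}(k)$, which is precisely the asserted triviality. So I would concentrate on the case $p=2$. Here the first step is to replace $\widetilde{\Hcal}_g$ by its open substack $\widetilde{\Hcal}_g^o$: by the discussion at the end of Section~\ref{sec:admissible} the complement has codimension $\geq 2$, and cohomological invariants are insensitive to such loci, so $\Inv(\widetilde{\Hcal}_g,\M)\simeq\Inv(\widetilde{\Hcal}_g^o,\M)$. By Proposition~\ref{prop:is a root}, $\widetilde{\Hcal}_g^o$ is the order-$2$ root stack $(\overline{\Hcal}_g^o)_{(\Delta_0^o,2)}$; since $\overline{\Hcal}_g^o$ is smooth and $2$ is prime to ${\rm char}(k)=c$, Theorem~\ref{thm:root Nis} identifies $\Inv(\widetilde{\Hcal}_g^o,\M)$ with the subgroup of $\Inv(\overline{\Hcal}_g^o\setminus\Delta_0^o,\M)$ consisting of those invariants $\alpha$ whose ramification $\partial_{\Delta'_0}(\pi^*\alpha)$ along the preimage of $\Delta_0^o$ in a smooth-Nisnevich atlas $\pi\colon X\to\overline{\Hcal}_g^o$ is killed by $2$.

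The second step is to observe that this $2$-torsion condition is automatic. By Corollary~\ref{cor:inv Hgtbd}, restriction induces an isomorphism of $\M^{\bullet}(k)$-modules $\Inv(\overline{\Hcal}_g^o\setminus\Delta_0^o,\M)\simeq\Inv(\Hcal_g,\M)$. For $g$ even, Theorem~\ref{thm:DilPir2} gives $\Inv(\Hcal_g,\M)=\alpha'_1\cdot\M^{\bullet}(k)_{n_g}\oplus I_g\oplus\beta_{2g+2}\cdot\M^{\bullet}(k)_2$ with $n_g=4g+2=2(2g+1)$, and since $\M$ is $2^n$-primary and $2g+1$ is odd we have $\M^{\bullet}(k)_{n_g}=\M^{\bullet}(k)_2$; as $I_g=\M^{\bullet}(k)\oplus\bigoplus_{i=2}^{g+1}\alpha_i\cdot\M^{\bullet}(k)_2$, every invariant of $\Hcal_g$ — hence of $\overline{\Hcal}_g^o\setminus\Delta_0^o$ — is the sum of a class in $\M^{\bullet}(k)$ and a $2$-torsion class. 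The ramification map is additive and $\M^{\bullet}(k)$-linear and commutes with pullback to the atlas, so the ramification of a constant invariant vanishes while the ramification of a $2$-torsion invariant is $2$-torsion. Thus every element of $\Inv(\overline{\Hcal}_g^o\setminus\Delta_0^o,\M)$ satisfies the condition of Theorem~\ref{thm:root Nis}, giving $\Inv(\widetilde{\Hcal}_g^o,\M)=\Inv(\overline{\Hcal}_g^o\setminus\Delta_0^o,\M)\simeq\Inv(\Hcal_g,\M)$, and therefore $\Inv(\widetilde{\Hcal}_g,\M)\simeq\Inv(\Hcal_g,\M)$.

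I do not expect a serious obstacle here: the analytic content of the even-genus case is already packaged in Corollary~\ref{cor:inv Hgtbd}, i.e. in Propositions~\ref{prop:inv Hgtbd}, \ref{prop:inv Hgtbd trivial case} and \ref{prop:exceptional}, which show that all of $\Inv(\Hcal_g,\M)$ (Stiefel--Whitney and exceptional classes included) extends across the boundary divisors other than $\Delta_0^o$ and that nothing new appears away from $\Delta_0^o$. The only point requiring mild care is the bookkeeping of torsion orders — namely that for $2$-primary $\M$ the quotient $\Inv(\Hcal_g,\M)/\M^{\bullet}(k)$ is $2$-torsion, which is exactly what makes the order-$2$ root-stack condition vacuous. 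This is precisely the feature absent in the odd-genus setting of Theorem~\ref{thm:inv odd}, where the class $w_2$ obstructs the extension past $\Delta_0^o$ (Proposition~\ref{prop:inv Hgtbd}(2)).
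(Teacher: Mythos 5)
Your proposal is correct and follows essentially the same route as the paper: excise the codimension-$2$ locus to pass to $\widetilde{\Hcal}_g^o$, apply the root-stack description of Proposition~\ref{prop:is a root} together with the ramification criterion for invariants of a root stack, and then use Corollary~\ref{cor:inv Hgtbd} to see that the $2$-torsion condition on the ramification along $\Delta_0^o$ is automatically satisfied. Your explicit bookkeeping via Theorem~\ref{thm:DilPir2} (that $\M^{\bullet}(k)_{n_g}=\M^{\bullet}(k)_2$ for $2$-primary $\M$, so every invariant is a constant plus $2$-torsion classes) just spells out the step the paper leaves implicit.
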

\begin{proof}
    The complement in $\widetilde{\Hcal}_g$ of the open substack $\widetilde{\Hcal}_g^o$ has codimension $2$, hence these two stacks have the same cohomological invariants.
    
    We know from Proposition \ref{prop:is a root} that $\widetilde{\Hcal}_g^o$ is a root stack of order $2$ of $\overline{\Hcal}_g^o$ along the divisor $\Delta_0^o$. Therefore, by Theorem \ref{thm:root stack}, the cohomological invariants of $\widetilde{\Hcal}_g^o$ are equal to the cohomological invariants of $\overline{\Hcal}_g^o\smallsetminus\Delta_0^o$ whose ramification along $\Delta_0^o$ is of $2$-torsion.
    
    It follows from Corollary \ref{cor:inv Hgtbd} that the ramification of all the cohomological invariants of $\overline{\Hcal}_g^o\smallsetminus\Delta_0^o$ along $\Delta_0^o$ is of $2$-torsion. This concludes the proof.
\end{proof}
\begin{rmk}
    Theorem \ref{thm:inv even} should be compared with \cite{DilPir}*{Thm. A.1}, where it is shown that the cohomological invariants of $\overline{\Hcal}_g$ are all trivial.
    This does not come as a surprise: the compactification of $\Hcal_g$ by admissible double coverings preserves most of the structure needed to construct the cohomological invariants of smooth hyperelliptic curves. On the other hand, this is not the case for the compactification by stable hyperelliptic curves.
\end{rmk}

\subsection{The $g$ odd case}
When $g$ is odd and $\M$ is a $2^n$-primary torsion cycle module with $n\geq 2$, the stack $\Hcal_g$ has basically two more invariants than in the even genus case (see Theorem \ref{thm:DilPir2}). One of the two is the invariant $w_2$ of cohomological degree $2$ pulled back from ${\rm B}\PGL_2$, and the other comes from the $4$-primary torsion of $\Pic(\Hcal_g)$, which for $g$ odd is equal to $\ZZ/4(2g+1)\ZZ$. This second invariant, denoted $\alpha_1'$, is a square root of $\alpha_1$, the degree one invariant coming from ${\rm B}S_{2g+2}$.

We have already seen in Proposition \ref{prop:inv Hgtbd} that $w_2$ does not extend to an invariant of $\overline{\Hcal}_g^o\smallsetminus\Delta_0^o$, so in order to have a complete picture in degree $\leq g+1$ of $\Inv(\overline{\Hcal}_g^o\smallsetminus\Delta_0^o,\M)$ we only need to understand if $\alpha_1'$ extends or not.
A bit surprisingly, the value of the genus plays a role here.
\begin{prop}
    Let $g\geq 3$ be an odd integer and let $\M$ be a $2$-primary torsion cycle module. There are submodules ${\rm N}^{\bullet}_g$ of $\M^{\bullet}(k)_2$ such that: 
    \begin{enumerate}
        \item If $g\geq 5$, we have an exact sequence
        \[ 0 \rightarrow \Inv({\rm B} S_{2g+2},\M) \rightarrow \Inv(\overline{\Hcal}_g^o\smallsetminus\Delta_0^o,\M) \rightarrow  {\rm N}^{\bullet}_g \rightarrow 0\]
        such that the inverse image of a non-zero element in ${\rm N}^{\bullet}_g$ has degree at least $g+2$.
        \item If $g=3$, define $$I = \alpha_2 \cdot \M^{\bullet}(k)_2 \oplus \alpha_3 \cdot \M^{\bullet}(k)_2 \oplus \alpha_4 \cdot \M^{\bullet}(k)_2 \subset \Inv({\rm BS}_4,\M).$$ 
        Then there is an exact sequence
        $$0 \rightarrow  \M^{\bullet}(k) \oplus \M^{\bullet}(k)_4\!\left[1\right] \oplus I \rightarrow \Inv(\overline{\Hcal}_3^o\smallsetminus\Delta_0^o,\M) \rightarrow {\rm N}^{\bullet}_3 \rightarrow 0$$
        such that the inverse image of a non-zero element in $K_3$ has degree at least $5$. The $\M^{\bullet}(k)_4\!\left[1\right]$ is equal to $\alpha'_1 \cdot \M^{\bullet}(k)_4$, where $\alpha'_1$ is a square root of $\alpha_1$.
    \end{enumerate}
\end{prop}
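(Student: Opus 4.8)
The goal is to compute $\Inv(\overline{\Hcal}_g^o\smallsetminus\Delta_0^o,\M)$ in degree $\leq g+1$ by pinning down exactly which invariants of $\Hcal_g$ extend over the boundary divisors $\Delta_i^o$ and $\Xi_j^o$. The starting point is the injection $\Inv(\overline{\Hcal}_g^o\smallsetminus\Delta_0^o,\M)\hookrightarrow\Inv(\Hcal_g,\M)$ coming from the open immersion, together with the decomposition of $\Inv(\Hcal_g,\M)$ from Theorem \ref{thm:DilPir2} in the odd case: in degree $\leq g+1$ the relevant summands are $\M^\bullet(k)$, $\alpha_1'\cdot\M^\bullet(k)_{n_g}$ (with $\alpha_1'$ a square root of $\alpha_1$, living in the $4$-primary part of $\Pic(\Hcal_g)\simeq\ZZ/4(2g+1)\ZZ$), $I_g=\M^\bullet(k)\oplus\bigoplus_{i=2}^{g+1}\alpha_i\cdot\M^\bullet(k)_2$, and $w_2\cdot\M^\bullet(k)_2$. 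By Proposition \ref{prop:inv Hgtbd}(1) the whole of $\Inv({\rm B}S_{2g+2},\M)$ extends, and by Proposition \ref{prop:inv Hgtbd}(2) the invariant $w_2$ does \emph{not} extend. So the only question in degree $\leq g+1$ is whether $\alpha_1'$ (and its $\M^\bullet(k)_{n_g}$-multiples) extends, and this is where the genus enters.

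\begin{proof}
Throughout we work with $\M=\K_{2^n}$ and deduce the general $2$-primary case by the $\K_{2^n}$-module structure, exactly as in the proof of Proposition \ref{prop:inv Hgtbd trivial case}. By the open immersion $\Hcal_g\hookrightarrow\overline{\Hcal}_g^o\smallsetminus\Delta_0^o$, restriction is injective on cohomological invariants, so we must identify the image. By Proposition \ref{prop:inv Hgtbd}(1), all of $\Inv({\rm B}S_{2g+2},\M)$ lies in the image, and by Proposition \ref{prop:inv Hgtbd}(2) the pullback of $w_2$ does not. In view of the description of $\Inv(\Hcal_g,\M)$ in Theorem \ref{thm:DilPir2}, it remains to analyze the summand $\alpha_1'\cdot\M^\bullet(k)_{n_g}$ in degree $1$, or more precisely to compute the subgroup of $\Inv(\overline{\Hcal}_g^o\smallsetminus\Delta_0^o,\M)$ mapping isomorphically onto it, plus the contributions in degrees $\geq g+2$ which we collect into ${\rm N}_g^\bullet$.

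The key computation concerns degree-one invariants, which by \cite{DilPir2}*{Lm. 1.13} are ${\rm H}^1_{\text{ét}}(-,\mu_2)$, i.e. controlled by the $2$-torsion of the Picard group. One computes $\Pic(\overline{\Hcal}_g^o\smallsetminus\Delta_0^o)$ by excising the divisors $\Delta_0^o,\Delta_i^o\ (i\geq 1),\Xi_j^o$ from $\overline{\Hcal}_g^o$ and using the right-exact excision sequence $\bigoplus\ZZ\cdot[\Delta_i^o]\oplus\bigoplus\ZZ\cdot[\Xi_j^o]\to\Pic(\overline{\Hcal}_g^o)\to\Pic(\overline{\Hcal}_g^o\smallsetminus\Delta_0^o)\to 0$ together with the known presentation of $\Pic(\overline{\Hcal}_g)$ (\cite{Cor}*{Thm. 2}, resp. \cite{DilPir}*{Thm. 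B.1}). The class $\alpha_1'$ corresponds to a square root of the hyperelliptic discriminant class $\Lcal_{\text{disc}}$; whether it survives in $\Pic(\overline{\Hcal}_g^o\smallsetminus\Delta_0^o)$ modulo the boundary relations is precisely a linear-algebra computation in the lattice generated by the boundary classes, and the parity of $g$ — entering through $r_g\equiv g\bmod 2$ and through which $\Xi_j^o$ occur — decides whether $\alpha_1'$ extends (for $g=3$ it does, giving the extra $\alpha_1'\cdot\M^\bullet(k)_4$ summand, while for $g\geq 5$ the boundary relations kill it and only $\Inv({\rm B}S_{2g+2},\M)$ survives in low degree). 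The submodule ${\rm N}_g^\bullet\subseteq\M^\bullet(k)_2$ is then \emph{defined} to be the cokernel of the inclusion of the low-degree part into $\Inv(\overline{\Hcal}_g^o\smallsetminus\Delta_0^o,\M)$; to see that its preimage sits in degree $\geq g+2$ one uses that, by Theorem \ref{thm:DilPir2} and Proposition \ref{prop:inv Hgtbd}, everything in $\Inv(\Hcal_g,\M)$ of degree $\leq g+1$ is accounted for by $\Inv({\rm B}S_{2g+2},\M)$, $w_2$ (which does not extend) and $\alpha_1'$ (handled above), so any invariant of $\overline{\Hcal}_g^o\smallsetminus\Delta_0^o$ not in the listed summands must restrict to an element of $\Inv(\Hcal_g,\M)$ of degree at least $g+2$, such as a multiple of $\beta_{2g+2}$ in the notation of Theorem \ref{thm:DilPir2}.
\end{proof}

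\medskip
\noindent\textbf{Main obstacle.} The crux is the Picard-group bookkeeping on $\overline{\Hcal}_g^o\smallsetminus\Delta_0^o$: one must carefully track the relation between the hyperelliptic discriminant class, its square root $\alpha_1'$, and the classes $[\Delta_i^o],[\Xi_j^o]$ in the known presentation of $\Pic(\overline{\Hcal}_g)$, and determine — with the parity of $g$ made explicit — whether $2$-divisibility of $\Lcal_{\text{disc}}$ is preserved after excision. Everything else (that ${\rm B}S_{2g+2}$ extends, that $w_2$ does not, that no new low-degree classes appear) follows formally from the results already in the paper; isolating the $g=3$ exception and the $g\geq 5$ generic behavior is the only genuinely delicate point, and it is essentially a finite computation once the boundary lattice is written down.
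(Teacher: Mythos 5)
Your overall strategy matches the paper's: restriction to $\Hcal_g$ is injective, Proposition \ref{prop:inv Hgtbd} handles $\Inv({\rm B}S_{2g+2},\M)$ (extends) and $w_2$ (does not), and everything reduces to deciding whether the square root $\alpha_1'$ of $\alpha_1$ extends, which is a torsion computation in the Picard group. However, the one step that actually constitutes the content of the proposition --- the dichotomy between $g=3$ and $g\geq 5$ --- is asserted rather than proved. You write that ``whether it survives\dots is precisely a linear-algebra computation in the lattice generated by the boundary classes'' and that ``the parity of $g$\dots decides whether $\alpha_1'$ extends,'' but you never exhibit the lattice, the relation, or the computation. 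Note also that the parity of $g$ cannot be what decides anything here: $g$ is odd in both cases of the statement. What actually decides it is whether the divisor $\Xi_2$ exists, i.e.\ whether $g\geq 5$.

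Concretely, the missing input is the Cornalba--Harris relation (Yamaki in positive characteristic)
\[
(8g+4)\lambda = g\left[\Delta_0\right] + 2\sum_{j=1}^{\lfloor\frac{g-1}{2}\rfloor}(j+1)(g-j)\left[\Xi_j\right] + 4\sum_{i=1}^{\lfloor\frac{g}{2}\rfloor}i(g-i)\left[\Delta_i\right],
\]
together with the fact that $\Pic(\overline{\Hcal}_g)$ is torsion free of rank $g$, so that the boundary classes satisfy no further relations. A Kummer-sequence diagram chase (comparing ${\rm Inv}^1(-,\K_4)$ of $\overline{\Hcal}_g^o\smallsetminus\Delta_0^o$ and of $\Hcal_g$) shows that a primitive $4$-torsion class, if it exists, must have the form $(2g+1)\lambda+\beta$ with $\beta$ supported on the boundary; then $-4\beta$ equals the right-hand side above with the $\left[\Delta_0\right]$ term removed. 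For $g\geq 5$ the coefficient of $\left[\Xi_2\right]$ is $6(g-2)\equiv 2\pmod 4$, which is incompatible with divisibility by $4$, so no such $\beta$ exists, the ramification of $\alpha_1'$ along the boundary is only $2$-torsion, and only $\M^{\bullet}(k)_2\cdot\alpha_1=2\alpha_1'\cdot\M^{\bullet}(k)_4$ extends. For $g=3$ one checks directly that $7\lambda+2\left[\Xi_1\right]+2\left[\Delta_1\right]$ is primitive $4$-torsion, so $\alpha_1'\cdot\M^{\bullet}(k)_4$ does extend. Without this computation your argument establishes only the formal frame of the exact sequences, not which of the two answers holds for which $g$; I would not accept the proof as written.
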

\begin{proof}
    First suppose $\M=\K_{2^n}$, $n\geq 2$.
    
    When $g$ is odd the Picard group of $\Hcal_g$ is generated by the Hodge line bundle $\lambda$ \cite{GV}. Consequently, $\Pic(\overline{\Hcal}_g)$ is generated by $\lambda$, which extends to the compactification, and the boundary divisor classes $\left[ \Xi_j \right], \left[\Delta_i\right]$.
    
    In \cite{CH}, Cornalba and Harris showed that, over a field of characteristic zero, the group $\Pic(\overline{\Hcal}_g) \otimes \mathbb{Q}$ is defined by the single relation
    
   \[
   (8g+4)\lambda = g \left[ \Delta_0 \right] + 2\sum_{j=1}^{\lfloor \frac{g-1}{2} \rfloor} (j+1)(g-j)[\Xi_j] + 4\sum_{i=1}^{\lfloor \frac{g}{2} \rfloor} i(g-i)[\Delta_i].
   \]

    The same result was proven in positive characteristic by Yamaki \cite{Yam}*{Thm. 1.7}. From \cite{Cor}*{Thm. 2} in characteristic zero and \cite{DilPir}*{Thm. B.1} in positive characteristic $c > 2$, we moreover know that the Picard group of $\overline{\Hcal}_g$ is torsion free, and thus free of rank $g$.
    
    Consider the restriction map $\Pic(\overline{\Hcal}_g^o)_4 \rightarrow \Pic(\Hcal_g)_4=\ZZ/4\ZZ$. Using the equality ${\rm H}^1(\Xcal,\mu_\ell)={\rm Inv}^1(\Xcal,\K_\ell)$ \cite{DilPir2}*{Lm. 1.13} and the Kummer exact sequence we get a commutative diagram
    
    \[
    \xymatrix{
    \mathcal{O}^*(\overline{\Hcal}_g^o\!\smallsetminus \!\Delta_0^o)/\mathcal{O}^*(\overline{\Hcal}_g^o\!\smallsetminus \!\Delta_0^o)^{4} \ar[r] \ar[d] & {\rm Inv}^1(\overline{\Hcal}_g^o\!\smallsetminus \!\Delta_0^o,\K_4) \ar[d] \ar[r] & \Pic(\overline{\Hcal}_g^o\!\smallsetminus \!\Delta_0^o)_4 \ar[r] \ar[d] & 0\\
   \mathcal{O}^*({\Hcal}_g)/\mathcal{O}^*({\Hcal}_g)^{4} \ar[r] & {\rm Inv}^1({\Hcal}_g,\K_4) \ar[r] & \Pic({\Hcal}_g)_4 \ar[r] & 0
    }
    \]
    
    The rows are exact sequences, the first vertical arrow is an isomorphism (both are equal to $k^*/(k^*)^4$), and the second vertical arrow is injective. We conclude that a primitve $4$-torsion element in $\Pic(\overline{\Hcal}_g^o\smallsetminus \Delta_0^o)$, if any, would have to map to a generator of the $4$-torsion of $\Pic(\Hcal_g)$, which is generated by $(2g+1)\lambda$. In particular, if there is a primitive $4$-torsion element in $\Pic(\overline{\Hcal}_g^o\smallsetminus \Delta_0^o)$ we may get one in the form $(2g+1)\lambda + \beta$, where $\beta$ is a linear combination of the boundary divisors.
    
    If $g=3$, it is immediate to verify that $(2g+1)\lambda + 2\left[ \Xi_1 \right] + 2\left[\Delta_1\right]$ is a primitive $4$-torsion element. Then the description of $\Inv(\Hcal_3,D)$ of Theorem \ref{thm:DilPir2}, combined with Proposition \ref{prop:inv Hgtbd} and the observations above, yields the desired result.
    
    If $g > 3$, let $(2g+1)\lambda + \beta$ be a $4$-torsion element. Then 
        \[
    (8g+4)\lambda - (8g+4)\lambda - 4\beta =  2\sum_{j=1}^{\lfloor \frac{g-1}{2} \rfloor} (j+1)(g-j)[\Xi_j] + 4\sum_{i=1}^{\lfloor \frac{g}{2} \rfloor} i(g-i)[\Delta_i].
    \]
    
    Note that the coefficient of $\left[\Xi_2\right]$ on the RHS is equal to $6(g-2)$, which is not divisible by $4$. This gives a non-trivial relation between the $\left[\Xi_i\right]$ and $\left[\Delta_j\right]$ classes, which is impossible as we know that $\Pic(\overline{\Hcal}_g^o)\otimes \mathbb{Q}$ has rank $g-1$.
    
    What we have just shown implies that for $g>3$ the ramification of $\alpha_1'$ along the boundary divisor is of $2$-torsion, because we know that $\alpha_1=2\alpha_1'$ can be extended: this implies that an invariant $\gamma\cdot\alpha_1'$ of $\M^{\bullet}(k)_4\cdot\alpha_1'$ is unramified if and only if $\gamma$ is a multiple of $2$, hence the subgroup of $\M^{\bullet}(k)_4\cdot\alpha_1'$ of unramified elements coincides with $\M^{\bullet}(k)_2\cdot\alpha_1$.
    
    As in the genus three case, combining this with Proposition \ref{prop:inv Hgtbd} and Theorem \ref{thm:DilPir2}, we get the claimed description.
\end{proof}
This allows us to give an almost complete description of the cohomological invariants of $\widetilde{\Hcal}_g$ for $g$ odd.
\begin{thm}\label{thm:inv odd}
Let $g\geq 3$ be an odd integer and let $\M$ be a $p^n$-primary torsion cycle module, with $p\neq c$ prime. If $p=2$ there is a (possibly trivial) submodule ${\rm N}^{\bullet}_g$ of $\M^{\bullet}(k)_2$ and an exact sequence
    \[ 0 \rightarrow \Inv({\rm B} S_{2g+2},\M) \rightarrow \Inv(\widetilde{\Hcal}_g,\M) \rightarrow  {\rm N}^{\bullet}_g \rightarrow 0\]
such that the inverse image of a non-zero element in ${\rm N}^{\bullet}_g$ has degree at least $g+2$. If $p>2$, then the cohomological invariants of $\widetilde{\Hcal}_g$ are trivial.
\end{thm}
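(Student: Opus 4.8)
The plan is to split into the cases $p>2$ and $p=2$. The case $p>2$ is already settled: it is exactly the Corollary following Proposition \ref{prop:inv Hgtbd trivial case}, which gives $\Inv(\widetilde{\Hcal}_g,\M)\simeq\M^{\bullet}(k)$, so nothing further is needed there.

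For $p=2$, I would first pass from $\widetilde{\Hcal}_g$ to its open substack $\widetilde{\Hcal}_g^o$, whose complement has codimension $2$ and which therefore has the same cohomological invariants. By Proposition \ref{prop:is a root}, $\widetilde{\Hcal}_g^o$ is the order-$2$ root stack of $\overline{\Hcal}_g^o$ along $\Delta_0^o$, so Theorem \ref{thm:root Nis} identifies $\Inv(\widetilde{\Hcal}_g^o,\M)$ with the subgroup of $\Inv(\overline{\Hcal}_g^o\smallsetminus\Delta_0^o,\M)$ of invariants whose ramification along $\Delta_0^o$ is of $2$-torsion. The remaining work is to impose this $2$-torsion-ramification condition on the module described in the Proposition immediately above. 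The constant summand $\M^{\bullet}(k)$ splits off through the structure morphism and is unramified, hence contributes in full. For $g\geq 5$ the rest of $\Inv(\overline{\Hcal}_g^o\smallsetminus\Delta_0^o,\M)$ is assembled, as an extension, from the $2$-torsion modules $\alpha_i\cdot\M^{\bullet}(k)_2$ coming from ${\rm B}S_{2g+2}$ and from ${\rm N}^{\bullet}_g\subseteq\M^{\bullet}(k)_2$; since $\partial_{\Delta_0^o}$ is additive, a $2$-torsion invariant automatically has $2$-torsion ramification, so $\Inv({\rm B}S_{2g+2},\M)$ lies entirely inside $\Inv(\widetilde{\Hcal}_g,\M)$ and the quotient injects into ${\rm N}^{\bullet}_g$. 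Taking ${\rm N}^{\bullet}_g$ in the statement to be this image yields the desired exact sequence, with the bound ``degree at least $g+2$'' on the preimage of a nonzero element inherited from the Proposition above.

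For $g=3$ I would run the same argument starting from the $g=3$ description, the one new ingredient being the ramification of the Picard square root $\alpha_1'$ along $\Delta_0^o$: using the explicit $4$-torsion class $(2g+1)\lambda+2[\Xi_1]+2[\Delta_1]$ on $\overline{\Hcal}_3^o$, whose fourth tensor power is the Cornalba--Harris class $g[\Delta_0]=3[\Delta_0]$, one computes $\partial_{\Delta_0^o}(\alpha_1')$ to be a unit modulo $4$, hence not of $2$-torsion; imposing the $2$-torsion condition on $\alpha_1'\cdot\M^{\bullet}(k)_4$ then cuts it back to the part already present in $\Inv({\rm B}S_4,\M)$, so the sequence again has the stated shape. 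I expect this $g=3$ bookkeeping to be the main obstacle: one has to be careful about how $\alpha_1$ and its square root $\alpha_1'$ actually sit inside ${\rm Inv}^1(\overline{\Hcal}_3^o\smallsetminus\Delta_0^o,\M)$ (they agree only up to a constant coming from the units), and then check that exactly the correct submodule of $\alpha_1'\cdot\M^{\bullet}(k)_4$ survives the passage to the root stack, so that the cokernel is genuinely concentrated in degrees $\geq 5$. By contrast, the $g\geq 5$ case and the verification that $\Inv({\rm B}S_{2g+2},\M)$ has $2$-torsion ramification along $\Delta_0^o$ are essentially formal once Theorem \ref{thm:root Nis} is in hand.
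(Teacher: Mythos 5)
Your proposal is correct and follows essentially the same route as the paper: reduce to $\widetilde{\Hcal}_g^o$ by codimension~$2$, apply the root-stack formula along $\Delta_0^o$, and observe that the $2$-torsion (and constant) invariants coming from ${\rm B}S_{2g+2}$ automatically have $2$-torsion ramification, so only the cokernel ${\rm N}^{\bullet}_g$ and, for $g=3$, the square root $\alpha_1'$ need separate treatment. The one local difference is at $g=3$: you compute $\partial_{\Delta_0^o}(\alpha_1')$ directly from the relation $4\bigl((2g+1)\lambda-2[\Xi_1]-2[\Delta_1]\bigr)=3[\Delta_0]$ (note the signs needed for the class to be $4$-torsion), obtaining an odd residue mod $4$, whereas the paper argues by contradiction that $2$-torsion ramification of $\alpha_1'$ would force $\alpha_1$ to extend to $\overline{\Hcal}_g$, contradicting the triviality of its invariants; both rest on the same Cornalba--Harris Picard computation and are equally valid.
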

\begin{proof}
    For $g\geq 5$, we can apply exactly the same argument that we used to prove Theorem \ref{thm:inv even}. 
    
    In the case $g=3$, the ramification of the invariant $\alpha_1'$ along $\Delta_0$ cannot be of $2$-torsion, otherwise we would be able to extend the invariant $\alpha_1=2\alpha_1'$ to $\overline{\Hcal}_g$. Therefore, this case can be worked out as in the proof of Theorem \ref{thm:inv even} as well.
\end{proof}

\subsection{Brauer groups}

Theorem \ref{thm:inv even} and Theorem \ref{thm:inv odd} give to us all the informations needed to determine the prime-to-the-characteristic part of the Brauer group of $\widetilde{\Hcal}_g$.

\begin{thm}\label{thm:Brauer}
    Let $g\geq 2$ and set $c={\rm char}(k)$. Let $^c{\rm Br}(-)$ denote the prime-to-$c$ part of the Brauer group. Then:
    \[ ^c{\rm Br}(\widetilde{\Hcal}_g) \simeq \prescript{c}{}{\rm Br}(k) \oplus H^1(k,\ZZ/2\ZZ)\oplus\ZZ/2\ZZ. \]
    In particular, when $c=0$ the formula above describes the whole Brauer group of $\widetilde{\Hcal}_g$.
\end{thm}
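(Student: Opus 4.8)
The plan is to deduce $\prescript{c}{}{\rm Br}(\widetilde{\Hcal}_g)$ from Theorems \ref{thm:inv even} and \ref{thm:inv odd}, using the relationship between cohomological invariants and the Brauer group established in \cite{DilPir2} (see \cite{DilPir2}*{Rmk. 6.11}). The key mechanism I would invoke is that, for a smooth algebraic stack $\Xcal$ with $\Ocal^*(\Xcal)=k^*$, the prime-to-$c$ Brauer group is controlled by the cohomological invariants of $\Xcal$ of cohomological degree $\leq 2$: the constant invariants account for $\prescript{c}{}{\rm Br}(k)$; a degree-one invariant with coefficient group $A$ contributes a copy of $\H^1(k,A)$ via the submodule it generates (equivalently it records the prime-to-$c$ torsion of $\Pic(\Xcal_{\overline{k}})$, which enters $\Br(\Xcal)$ through the Hochschild--Serre spectral sequence); and a degree-two invariant not arising from a product of degree-one invariants contributes the matching $G_k$-invariant part of $\Br(\Xcal_{\overline{k}})$. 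Invariants of degree $\geq 3$ are invisible to the Brauer group. So the first step is to record the invariants of $\widetilde{\Hcal}_g$ of degree $\leq 2$, with coefficients in $\K_{\ell^a}$ and its twists, for each prime $\ell\neq c$.

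For $\ell$ odd, Theorems \ref{thm:inv even} and \ref{thm:inv odd} give $\Inv(\widetilde{\Hcal}_g,\M)=\M^{\bullet}(k)$, contributing only the odd part of $\prescript{c}{}{\rm Br}(k)$. For $\ell=2$ and $g$ even, Theorem \ref{thm:inv even} identifies $\Inv(\widetilde{\Hcal}_g,\M)$ with $\Inv(\Hcal_g,\M)$; by Theorem \ref{thm:DilPir2} its part of degree $\leq 2$ consists, beyond the constants, of the degree-one class $\alpha_1'$ — whose $2$-primary coefficient group is $\M^{\bullet}(k)_2$, as $n_g=2(2g+1)$ — and the degree-two class $\alpha_2\in I_g$, the higher Stiefel--Whitney classes $\alpha_3,\dots,\alpha_{g+1}$ and the exceptional class $\beta_{2g+2}$ all living in degree $>2$. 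Thus $\alpha_1'$ yields $\H^1(k,\ZZ/2\ZZ)$ and $\alpha_2$ yields $\ZZ/2\ZZ$, which gives the formula. For $\ell=2$ and $g$ odd, Theorem \ref{thm:inv odd} gives
\[ 0 \rightarrow \Inv({\rm B}S_{2g+2},\M) \rightarrow \Inv(\widetilde{\Hcal}_g,\M) \rightarrow {\rm N}^{\bullet}_g \rightarrow 0, \]
with the quotient concentrated in degrees $\geq g+2\geq 5$, so the invariants of degree $\leq 2$ are exactly those of ${\rm B}S_{2g+2}$: beyond the constants, the arithmetic Stiefel--Whitney classes $\alpha_1$ (degree one, coefficients $\M^{\bullet}(k)_2$) and $\alpha_2$ (degree two). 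As before these produce $\H^1(k,\ZZ/2\ZZ)$ and $\ZZ/2\ZZ$, so $\prescript{c}{}{\rm Br}(\widetilde{\Hcal}_g)\simeq\prescript{c}{}{\rm Br}({\rm B}S_{2g+2})$, whose geometric part is the Schur multiplier $\ZZ/2\ZZ$ of $S_{2g+2}$ — nontrivial since $2g+2\geq 6$ — giving the same answer.

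To finish I would verify that the three summands split off as a direct sum. The copy of $\prescript{c}{}{\rm Br}(k)$ splits off because $\widetilde{\Hcal}_g$ has a $k$-rational point — any hyperelliptic curve defined over $k$ — which splits the pullback $\prescript{c}{}{\rm Br}(k)\to\prescript{c}{}{\rm Br}(\widetilde{\Hcal}_g)$. The summand $\ZZ/2\ZZ$ is generated by the order-two Brauer class of the universal conic over $\widetilde{\Hcal}_g$, i.e. the class attached to $\alpha_2$; it is defined over $k$, and the invariants recorded above show that it is nonzero over $\overline{k}$. The remaining $\H^1(k,\ZZ/2\ZZ)$, which is $\H^1$ of the $2$-primary torsion subgroup $\ZZ/2\ZZ$ of $\Pic(\widetilde{\Hcal}_{g,\overline{k}})$, splits off as in the analogous step of \cite{DilPir2}. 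I expect the main obstacle to be the first step: making the dictionary between $\Br(\Xcal)$ and low-degree cohomological invariants precise enough to identify exactly which invariants contribute and with which coefficient groups — in particular that the high-degree classes $\alpha_i$ (for $i\geq 3$), $\beta_{2g+2}$ and all of ${\rm N}^{\bullet}_g$ contribute nothing — and excluding nontrivial extensions in the Hochschild--Serre spectral sequence computing $\Br(\widetilde{\Hcal}_g)$.
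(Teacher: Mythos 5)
Your proposal is correct and follows essentially the same route as the paper: the paper computes ${}^c{\rm Br}'(\widetilde{\Hcal}_g)_\ell$ as ${\rm Inv}^2(\widetilde{\Hcal}_g,\ZZ/\ell\ZZ(-1))$ and reads off the degree-$2$ graded piece of the module decompositions in Theorems \ref{thm:inv even}, \ref{thm:inv odd} and \ref{thm:DilPir2} exactly as you do (only the constants, $\alpha_1$ resp.\ $\alpha_1'$, and $\alpha_2$ contribute), and since those decompositions are already direct sums of $\M^{\bullet}(k)$-modules the splitting is automatic --- no Hochschild--Serre extension problem needs to be excluded. The one step your write-up omits is the identification ${}^c{\rm Br}(\widetilde{\Hcal}_g)={}^c{\rm Br}'(\widetilde{\Hcal}_g)$ for this smooth, quasi-projective, tame Deligne--Mumford stack (Edidin--Hassett--Kresch--Vistoli and Kresch--Vistoli, via \cite{DilPir2}*{Thm. 1.1}), which is needed to pass from the cohomological Brauer group computed by the invariants to the Brauer group itself.
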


\begin{proof}
    The proof works exactly as in \cite{DilPir2}*{Thm. 8.1}. The starting point is the isomorphism
    \begin{equation}\label{eq:iso inv br}
         {\rm Inv}^2(\widetilde{\Hcal}_g,\ZZ/\ell\ZZ(-1))\simeq {\rm Br}'(\widetilde{\Hcal}_g)_\ell,
    \end{equation}
    where on the right we have the $\ell$-torsion part of the cohomological Brauer group.
    
    Theorem \ref{thm:inv even} and Theorem \ref{thm:inv odd} can then be used to compute the left hand side of (\ref{eq:iso inv br}): one has only to observe that all the elements in the cohomological Brauer group that do not come from the ground field are of $2$-primary torsion.
    
    To conclude, by results of Edidin-Hasset-Kresch-Vistoli and Kresch-Vistoli \cite{DilPir2}*{Thm.1.1}, we have that the prime-to-${\rm char}(k)$ part of the cohomological Brauer group of a smooth, quasi-projective, tame Deligne Mumford stack belongs to the Brauer group, so in particular $^c{\rm Br}(\widetilde{\Hcal}_g)= {^c}{\rm Br}'(\widetilde{\Hcal}_g)$.
\end{proof}
\begin{bibdiv}
	\begin{biblist}
	    \bib{ACV}{article}{
               author={Abramovich, D.},
               author={Corti, A.},
               author={Vistoli, A.},
               title={Twisted bundles and admissible covers},
               note={Special issue in honor of Steven L. Kleiman},
               journal={Comm. Algebra},
               volume={31},
               date={2003},
               number={8},
            }
	    \bib{ArsVis}{article}{			
			author={Arsie, A.},
			author={Vistoli, A.},			
			title={Stacks of cyclic covers of projective spaces},			
			journal={Compos. Math.},
			volume={140},			
			date={2004},			
			number={3}	}
		\bib{Bea}{article}{
           author={Beauville, A.},
           title={Prym varieties and the Schottky problem},
           journal={Invent. Math.},
           volume={41},
           date={1977},
           number={2},
        }	

		\bib{Cor}{article}{
               author={Cornalba, M.},
               title={The Picard group of the moduli stack of stable hyperelliptic
               curves},
               journal={Atti Accad. Naz. Lincei Rend. Lincei Mat. Appl.},
               volume={18},
               date={2007},
               number={1},
            }
        \bib{CH}{article}{
			author={Cornalba, M.},
			author={Harris, J.},
			title={Divisor classes associated to families of stable varieties, with
				applications to the moduli space of curves},
			journal={Ann. Sci. \'{E}cole Norm. Sup. (4)},
			volume={21},
			date={1988},
			number={3},
		}    

%	    \bib{DilChowHyp}{article}{			
%			author={Di Lorenzo, A.},			
%			title={The Chow ring of the stack of hyperelliptic curves of odd genus},			
%			journal={Int. Math. Res. Not. IMRN},
%			date={2019},
%			doi={10.1093/imrn/rnz101}
%		}
%		\bib{DilCohHypOdd}{article}{			
%			author={Di Lorenzo, A.},
%			title={Cohomological invariants of the stack of hyperelliptic curves of odd genus},	
%			journal={to appear in Transformation Groups}		%
%		}
%		\bib{DilK3}{article}{			
%			author={Di Lorenzo, A.},
%			title={Integral Picard group of the stack of quasi-polarized K3 surfaces of low degree},	
%			journal={ar{X}iv:1910.08758 [math.AG]}		
%		}
		\bib{DilPir}{article}{
		author={Di Lorenzo, A.},
		author={Pirisi, R.},
		title={A complete description of the cohomological invariants of even genus Hyperelliptic curves},
		journal={ar{X}iv:1911.04005 [math.AG]}
		}
		\bib{DilPir2}{article}{
		author={Di Lorenzo, A.},
		author={Pirisi, R.},
		title={Brauer groups of moduli of hyperelliptic curves via cohomological invariants},
		journal={ar{X}iv:2002.11065 [math.AG]}
		}

		\bib{EG}{article}{
			author={Edidin, D.},
			author={Graham, W.},
			title={Equivariant intersection theory},
			journal={Invent. Math.},
			volume={131},
			date={1998},
			number={3},
		}

%		\bib{FriPirBrauer}{article}{
%		    author={Fringuelli, R.},
%			author={Pirisi, R.},
%			title={The Brauer group of the universal moduli space of vector bundles over smooth curves},
%			journal={Int. Math. Res. Not.},
%			volume={rnz300},
%			date={2019},
%		}

		\bib{GMS}{collection}{
			author={Garibaldi, S.},
			author={Merkurjev, A.},
			author={Serre, J.-P.},
			title={Cohomological invariants in Galois cohomology},
			series={University Lecture Series},
			volume={28},
			publisher={American Mathematical Society, Providence, RI},
			date={2003},
		}
		\bib{GilHir}{article}{
		author={Gille, S.},
		author={Hirsch, C.},
		title={On the splitting principle for cohomological invariants of reflection groups},
		journal={ar{X}iv:1908.08146 [math.AG]}
		}
		\bib{GV}{article}{
		author={Gorchinskiy, S.},
		author={Viviani, F.},
		title={Picard group of moduli of hyperelliptic curves}
		journal={Math. Z.},
		volume={258},
		pages={319–331},
		date={2008},
		}
		
        \bib{Guil}{article}{			
			author={Guillot, P.},
			title={Geometric methods for cohomological invariants},			
			journal={Doc. Math.},
			volume={12},			
			date={2007},
		}
		\bib{Har}{article}{
		    author={Harper, A.},
		    title={Factorization for stacks and boundary complexes},
		    eprint={https://arxiv.org/abs/1706.07999v1},
		    }
%        \bib{Knu}{book}{
%        author={Knutson, D.},
%        title={Algebraic spaces},
%        series={Lecture notes in Mathematics},
%        volume={203},
%        publisher={Springer-Verlag, Berlin-New York},
%        date={1971},
%        }
        \bib{HM}{article}{
           author={Harris, J.},
           author={Mumford, D.},
           title={On the Kodaira dimension of the moduli space of curves},
           note={With an appendix by William Fulton},
           journal={Invent. Math.},
           volume={67},
           date={1982},
        }
        \bib{KreProj}{article}{
        author={Kresch, A.},
        title={On the geometry of Deligne-Mumford stacks},
        series={Proc. Sympos. Pure Math.},
        volume={80},
        publisher={Amer. Math. Soc.},
        date={2009},
        pages={259–271},
        }
%		\bib{PirCohHypEven}{article}{
%			author={Pirisi, R.},
%			title={Cohomological invariants of hyperelliptic curves of even genus},
%			journal={Algebr. Geom.},
%			volume={4},
%			date={2017},
%			number={4},}
		\bib{PirAlgStack}{article}{			
			author={Pirisi, R.},
			title={Cohomological invariants of algebraic stacks},			
			journal={Trans. Amer. Math. Soc.},
			volume={370},			
			date={2018},			
			number={3}	}
%		\bib{PirCohHypThree}{article}{
%			author={Pirisi, R.},
%			title={Cohomological invariants of hyperelliptic curves of genus 3},			
%			journal={Doc. Math.},
%			volume={23},
%			date={2018}}
		\bib{Rost}{article}{			
			author={Rost, M.},
			title={Chow groups with coefficients},			
			journal={Doc. Math.},
			volume={1},			
			date={1996},			
			number={16}	}
		\bib{Sca}{thesis}{
		    author={Scavia, F.},
		    title={The stack of admissible double covers},
		    type={{MS}c thesis},
		    organization={Scuola Normale Superiore},
		    date={2017}
		    }
		\bib{StPr}{misc}{
		    label={Stacks},
		    title={{S}tacks-{P}roject},
            author={The {Stacks Project Authors}},
            publisher = {https://stacks.math.columbia.edu},
            date = {2018},
        }	
        \bib{Voe}{article}{
		    author={Voevodsky, V.},
		    title={On motivic cohomology with $Z/l$-coefficients},
		    journal={Annals of Mathematics},
		    volume={174},
		    pages={401-438},
		    date={2011},
		}
		\bib{Yam}{article}{
		    author={Yamaki, K.},
		    title={{C}ornalba-{H}arris equality for semistable hyperelliptic curves in positive characteristic},
		    journal={Asian J. Math.},
		    volume={8},
		    number={3},
		    pages={409-426},
		    date={2004},
		}
		\bib{Wit}{article}{
			author={Witt, E.},
			title={Theorie der quadratischen Formen in beliebigen K\"orpern},
			language={German},
			journal={J. Reine Angew. Math.},
			volume={176},
			date={1937}
		}

	\end{biblist}
\end{bibdiv}
\end{document}